\renewcommand\emptyset{\varnothing}
\renewcommand\phi{\varphi}
\newcommand\Z{\mathbb{Z}}
\newcommand\R{\mathbb{R}}
\DeclareMathOperator*{\relint}{relint}
\DeclareMathOperator*{\conv}{conv}
\DeclareMathOperator*{\Ehr}{E}
\DeclareMathOperator*{\exc}{exc}
\DeclareMathOperator*{\des}{des}
\DeclareMathOperator*{\bad}{bad}
\newtheorem{thm}{Theorem}[section]
\newtheorem*{thm*}{Theorem}
\newtheorem{cor}[thm]{Corollary}
\newtheorem{lem}[thm]{Lemma}
\newtheorem{prop}[thm]{Proposition}
\newtheorem{conj}{Conjecture}
\theoremstyle{definition}
\newcommand{\A}{\mathcal{A}}
\title{The Eulerian transformation}
\author{Petter Br\"and\'en}
\address{Department of Mathematics, %
KTH Royal Institute of Technology, \mbox{SE-100 44} Stockholm,
Sweden}
\email{pbranden@math.kth.se}
\author{Katharina Jochemko}
\address{Department of Mathematics, %
KTH Royal Institute of Technology, \mbox{SE-100 44} Stockholm,
Sweden}
\email{jochemko@kth.se}
\keywords{Eulerian polynomials, real-rootedness, unimodality, $h$-polynomials, Ehrhart theory}
\subjclass[2010]{05A15, 26C10, 52B05, 52B20}
\date{\today}
\begin{document}

\maketitle

\begin{abstract}
Eulerian polynomials are fundamental in combinatorics and algebra. In this paper we study the linear transformation $\A : \R[t] \to \R[t]$ defined by $\A(t^n) = A_n(t)$,  where $A_n(t)$ denotes the $n$-th Eulerian polynomial. We give combinatorial, topological and Ehrhart theoretic interpretations of  the operator $\A$, and investigate questions of unimodality and real-rootedness. In particular, we disprove a conjecture by Brenti (1989) concerning the preservation of real zeros, and generalize and strengthen recent results of Haglund and Zhang (2019) on binomial Eulerian polynomials.
\end{abstract}

\section{Introduction}
The \emph{Eulerian numbers}, $\{A_{nk}\}_{0\leq k \leq n}$, are among the most studied numbers in combinatorics and algebra, see the extensive survey~\cite{Petersen}. The \emph{Eulerian polynomials} may be defined by 
$$
A_n(t)= A_{n0}+ A_{n1}t + \cdots + A_{nn}t^n,
$$
where $A_{nk}$ is the number of permutations $\sigma$ in the symmetric group on $\{1,2,\ldots, n\}$ for which $\sigma(i) \geq i$ for exactly $k$ numbers $i$. 
\begin{align*}
A_0(t) &= 1, & A_3(t) &= t+4t^2+t^3, \\
A_1(t) &= t, & A_4(t) &= t+11t^2+11t^3+t^4, \\
A_2(t) &= t+t^2, & A_5(t) &= t+26t^2+66t^3+26t^4+ t^5.
\end{align*}

Already Frobenius~\cite{Frobenius} knew that for all $n \geq 0$, all the zeros of $A_n(t)$ are real. This is not an isolated phenomenon. Many polynomials in combinatorics and algebra are known or conjectured to be unimodal, log-concave or real-rooted. Several techniques are now available for proving such results and conjectures, see~\cite{PetterUnimodality,BrentiSurvey,StanleySurvey}. One approach is to study linear transformations that preserve real-rootedness properties. This topic goes back to the works of Jensen, P\'olya, Schur and Szeg\H{o}, and has recently regained popularity and importance, see \cite{Borcea-Branden}, and the references therein. In 1989, Brenti~\cite{BrentiUnimodal} conjectured that the linear operator $\A : \R[t] \to \R[t]$ defined by $\A(t^n) = A_n(t)$ preserves the property of having only real and non-positive zeros. Although we disprove this conjecture here in its full generality (Proposition~\ref{prop:disproveconj}),  we go on to study more subtle properties of the operator $\A$. 

Of recent particular interest are binomial Eulerian polynomials, a variant of Eulerian polynomials defined by
\[
 \tilde{A}_n (t)=\sum _{i=0}^n{n\choose i}A_i (t)=\A ((t+1)^n)\, .
\]
They appear in work by Postnikov \emph{et al.}~\cite{Postnikovstellohedron} as the $h$-polynomial of the stellohedron. Binomial Eulerian polynomials share fundamental properties with (ordinary) Eulerian polynomials: Postnikov \emph{et al.}~\cite{Postnikovstellohedron}, and more recently Shareshian and Wachs~\cite{Wachs} showed that $\tilde A _n (t)$ is $\gamma$-positive and provided different combinatorial interpretations of the $\gamma$-coefficients. In particular, the coefficients of the binomial Eulerian polynomial $ \tilde{A}_n (t)$  form a symmetric and unimodal sequence for each $n\geq 0$. Recently, Haglund and Zhang~\cite{HaglundZhang}  proved that the binomial Eulerian polynomials are real-rooted, affirming a conjecture by Ma \emph{et al.}~\cite{ma2017recurrence}. Indeed, they proved real-rootedness of the binomial Eulerian polynomials for $r$-colored permutations introduced by Athanasiadis~\cite{Athanasiadis}, which may be seen to be equal to $\tilde{A}_{n,r}(t)=\A ((rt+1)^n)$. 

We conjecture that $\A(f)$ is real-rooted whenever $f$ is a nonnegative linear combination of the polynomials $\{t^i(t+1)^{n-i}\}_{i=0,1,\ldots,n}$ (Conjecture~\ref{conj:newconj}). This would imply that if $f$ is real-rooted and all its zeros are contained in the interval $[-1,0]$, then $\A(f)$ is real-rooted. In support of this conjecture we show in Section~\ref{sec:combint} that $\A (f)$ has alternatingly increasing and thus unimodal coefficients whenever $f$ is a nonnegative linear combination of the polynomials $\{t^i(t+1)^{n-i}\}_{i=0,1,\ldots,n}$ (Theorem~\ref{thm:alternating}).  The proof uses a combinatorial interpretation in terms of the excedance statistics on decorated permutations (Proposition~\ref{prop:combint}). In Section~\ref{sec:combint}  we also give a new proof for the $\gamma$-positivity of $\tilde A _n (t)$ (Theorem~\ref{gammaexp}) providing an interpretation for the $\gamma$-coefficients that is different from the previous ones given in~\cite{Postnikovstellohedron,Wachs}. In Section~\ref{sec:real-rooted} we prove that $\A ((t+q)^n)$ is real-rooted for all real numbers $0\leq q \leq 1$ (Theorem~\ref{thm:main1}). In fact, we prove that $\A ((t+q)^n)$ interlaces $\A ((t+p)^n)$ for all $0\leq p\leq q\leq 1$ (Theorem~\ref{thm:interlacing}), and that $\A ((t+p)^n)$ has an interlacing symmetric decomposition (Corollary~\ref{cor:interlacingsym}). This refines and strengthens the aforementioned results by Haglund and Zhang~\cite{HaglundZhang}. 

We conclude our discussion by giving topological and Ehrhart theoretic interpretations of the linear operator $\A$. Given a simplicial complex $\Delta$ with $f$-polynomial $f_\Delta (t)$ we construct in Section~\ref{sec:topolint} a simplicial complex $\Delta '$ such that $\A (f_\Delta)$ is equal to the $h$-polynomial of $\Delta '$ (Theorem~\ref{thm:topoint}). In Section~\ref{Ehrhart} we give an interpretation for $\A ((\theta _1 t+1)\cdots (\theta _n t+1))$ as the $h^\ast$-polynomial of a lattice polytope whenever $\theta _1,\ldots, \theta _n$ are nonnegative integers (Theorem~\ref{thm:hstarinterpretation}). In particular, the considered lattice polytopes have unimodal $h^\ast$-polynomial, a property of particular interest in Ehrhart theory, see the survey~\cite{braun2016unimodality}.

\section{Counterexamples to Brenti's conjecture}
Brenti~\cite{BrentiUnimodal} conjectured that if $f$ is a polynomial with nonnegative coefficients and only real zeros, then $\A(f)$ is a polynomial that has nonnegative coefficients and only real zeros. To see that this conjecture is false, consider the polynomials 
\begin{equation}\label{eulerseries}
p_n(t) = \A\left(\left( 1+\frac {xt}  n \right)^n\right)= \sum_{k=0}^n \binom n k A_k(t) (x/n)^k =: \sum_{k=0}^n a_{nk} \frac {t^k}{k!} 
\end{equation}
where $x>0$ and $n >0$. Notice that $a_{n0}=1$ and $a_{n1} = (1+x/n)^n-1$. It follows from the Newton inequalities~\cite[Lemma 1.1]{PetterUnimodality} that if 
$\sum_{k=0}^n b_k t^k /k!$ is a real-rooted  polynomial with nonnegative coefficients and $b_0=1$, then
$$
b_k^2 \geq b_{k-1}b_{k+1}, \ \ \ \mbox{ for all } k \geq 1, 
$$
from which it follows that $b_k \leq b_1^k$ for all $k$. Suppose now that $p_n(t)$ is real-rooted for all $n$ in some infinite set $\mathcal{S}$ of positive integers. Then $a_{nk} \leq a_{n1}^k\leq (e^x-1)^k$ for all $k \geq 0$ and $n \in \mathcal{S}$. Notice that 
$$
\lim_{n \to \infty} n^{-k}\binom n k = \frac 1 {k!}. 
$$
By dominated convergence it follows that the series \eqref{eulerseries} converges to an entire function (in $t$)
\begin{equation}\label{eulerexp}
\sum_{k=0}^\infty A_k(t) \frac {x^k}{k!}. 
\end{equation}
However, the series \eqref{eulerexp} has the form 
\begin{equation}\label{esexpr}
\frac {1-t}{1-t\exp(x-tx)},
\end{equation}
see~\cite[Proposition 1.4.5]{EC1}. Moreover, the denominator of \eqref{esexpr} is zero if and only if $(tx)e^{-tx}=xe^{-x}$. This equation (in $t$) has two solutions counting multiplicities. Hence \eqref{esexpr}  has a pole, and the series does not define an entire function, which is a contradiction. We conclude 
\begin{prop}\label{prop:disproveconj}
For each $x>0$, there exists an integer $N$ such that the polynomial (in $t$) 
 $$\A\left(\left( 1+\frac {xt}  n \right)^n\right)$$
 fails to be real-rooted for all $n \geq N$. 
\end{prop}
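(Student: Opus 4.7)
The plan is proof by contradiction: if the polynomial $\A((1+xt/n)^n)$ were real-rooted for infinitely many $n$, then Newton's inequalities would furnish a uniform geometric bound on its coefficients, which by dominated convergence would force the formal exponential series $\sum_k A_k(t) x^k/k!$ to define an entire function of $t$. But that series has a known closed form with a genuine pole in $t$, and this contradiction rules out real-rootedness for all sufficiently large $n$.

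First I would normalize by writing
\[
p_n(t) \;=\; \A\!\left(\left(1+\frac{xt}{n}\right)^n\right) \;=\; \sum_{k=0}^n \binom{n}{k} A_k(t)\left(\frac{x}{n}\right)^k \;=\; \sum_{k=0}^n a_{nk}\,\frac{t^k}{k!},
\]
so that Newton-type inequalities apply cleanly to the coefficients $a_{nk}$. A direct computation gives $a_{n0}=1$ and $a_{n1}=(1+x/n)^n-1\to e^x-1$. For any real-rooted polynomial $\sum_k b_k t^k/k!$ with nonnegative coefficients and $b_0=1$, Newton's inequalities give $b_k^2 \ge b_{k-1}b_{k+1}$ and hence $b_k \le b_1^k$ by induction. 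Supposing for contradiction that $p_n$ is real-rooted for all $n$ in some infinite set $\mathcal{S}$ of positive integers, this forces $a_{nk} \le ((1+x/n)^n - 1)^k$ for every $n \in \mathcal{S}$, a bound which is uniformly of order $(e^x - 1 + o(1))^k$.

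Combining this bound with the asymptotic $n^{-k}\binom{n}{k} \to 1/k!$, dominated convergence implies that $p_n(t)$ converges, as $n\to\infty$ through $\mathcal{S}$, to an entire function of $t$ whose coefficient of $x^k/k!$ is $A_k(t)$. However, the standard exponential generating function for Eulerian polynomials (Stanley, EC1, Proposition 1.4.5) identifies this series with
\[
\frac{1-t}{1 - t\exp(x - tx)}.
\]
The denominator vanishes precisely when $(tx)e^{-tx} = xe^{-x}$, which for fixed $x > 0$ is an equation in $t$ with two solutions counting multiplicities, producing a genuine pole and contradicting entireness. Since no such $\mathcal{S}$ can exist, real-rootedness can hold only for finitely many $n$, which is exactly the claim.

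The main obstacle is securing the uniform geometric coefficient bound: without Newton's inequalities one has no a priori control on the growth of $a_{nk}$ in $k$, and hence no way to pass to the limit inside the sum. Everything after the bound is essentially a short appeal to a textbook identity, together with the observation that the specific transcendental equation $(tx)e^{-tx}=xe^{-x}$ has more than zero roots in $t$.
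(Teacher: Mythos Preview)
Your proposal is correct and follows essentially the same approach as the paper's own argument: the same normalization $p_n(t)=\sum_k a_{nk}t^k/k!$, the same use of Newton's inequalities to get $a_{nk}\le a_{n1}^k\le (e^x-1)^k$, the same dominated-convergence passage to the entire limit $\sum_k A_k(t)x^k/k!$, and the same contradiction via the closed form $(1-t)/(1-t\exp(x-tx))$ having a pole coming from the equation $(tx)e^{-tx}=xe^{-x}$. The only cosmetic difference is that the paper bounds $a_{n1}$ directly by $e^x-1$ (using $(1+x/n)^n\le e^x$) rather than writing it as $e^x-1+o(1)$.
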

A concrete counterexample is the polynomial $\A(\left( 1+ {t}/5 \right)^5)$, which is seen to have two non-real zeros which are approximately $-1.79\pm 0.56i$. 

We make the following conjecture, and we will provide some partial evidence in the forthcoming sections. 

\begin{conj}\label{conj:newconj}
If $f= \sum_{k=0}^n a_k t^k (1+t)^{n-k}$, where $a_k \geq 0$ for all $0\leq k \leq n$, then $\A(f)$ is real-rooted. 
\end{conj}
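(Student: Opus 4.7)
Our strategy to prove Conjecture~\ref{conj:newconj} is to establish that the finite family
\[
 P_k \;:=\; \A(t^k(1+t)^{n-k}), \qquad k=0,1,\ldots,n,
\]
is \emph{compatible}, meaning that every nonnegative linear combination of the $P_k$ is real-rooted. Since $\A$ is linear, Conjecture~\ref{conj:newconj} is equivalent to this compatibility. By a classical criterion for interlacing families (see, e.g., \cite{PetterUnimodality}), it suffices to exhibit an interlacing chain $P_0 \preceq P_1 \preceq \cdots \preceq P_n$, all of the same degree $n$ with positive leading coefficients; one checks directly that $P_0 = \tilde{A}_n(t)$ and $P_n = A_n(t)$, which, by Theorem~\ref{thm:main1}, sit at the two endpoints of the chain $\{\A((t+q)^n):q\in[0,1]\}$.

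The natural approach is to leverage Theorem~\ref{thm:interlacing} together with the Bernstein-type identity
\[
 (t+q)^n \;=\; \sum_{k=0}^n \binom{n}{k}\,(1-q)^k q^{n-k}\, t^k(1+t)^{n-k},
\]
which presents $\A((t+q)^n)$ as a convex combination of the $P_k$'s for each $q\in[0,1]$. The one-parameter interlacing in $q$ supplied by Theorem~\ref{thm:interlacing} is a chain among convex combinations of the $P_k$; to pass from this to interlacing between successive basis polynomials $P_k$ themselves, one would apply iterated finite differences in $q$, or equivalently polarise the Bernstein weights, to isolate the individual $P_k$ as limits of interlacing combinations of $\A((t+q)^n)$. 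A possibly cleaner route is the stronger multi-parameter statement that $\A\bigl(\prod_{i=1}^n(t+\alpha_i)\bigr)$ is real-rooted for every $(\alpha_1,\ldots,\alpha_n)\in[0,1]^n$, extending Theorem~\ref{thm:main1} from the diagonal to the whole cube; the conjecture would then follow by decomposing a general nonnegative combination $\sum a_k t^k(1+t)^{n-k}$ as a mixture of such products via De Casteljau-type arguments.

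The principal difficulty is precisely the step from the one-parameter interlacing of $\A((t+q)^n)$ in $q$ to the full $(n+1)$-parameter compatibility required among the $P_k$. The seemingly obvious recursion $t^k(1+t)^{n-k} = t^k(1+t)^{n-k-1} + t^{k+1}(1+t)^{n-k-1}$ mixes total degrees on the right-hand side and does not cleanly descend to the inductive hypothesis, so a straightforward induction on $n$ is doomed. Overcoming this almost certainly requires a genuinely multivariate input, such as establishing real-stability of a bivariate lift like $\sum_{k=0}^n \binom{n}{k} y^{n-k} P_k(t)$ in the sense of Borcea--Br\"and\'en \cite{Borcea-Branden}, or exploiting the refined combinatorial and geometric interpretations of $\A$ given by the decorated-permutation model of Proposition~\ref{prop:combint} and the Ehrhart-theoretic construction of Section~\ref{Ehrhart}, which may expose a natural common interlacer for the family. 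The partial evidence provided by Theorem~\ref{thm:alternating} on alternatingly increasing coefficients is reassuring, since alternating increase is a shadow of the interlacing chain we conjecture the $P_k$'s to form.
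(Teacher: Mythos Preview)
The statement you are attempting is Conjecture~\ref{conj:newconj}, which the paper explicitly leaves \emph{open}. There is no proof in the paper to compare against; the authors offer only partial evidence (Theorem~\ref{thm:alternating} on the alternatingly increasing property, Theorem~\ref{thm:main1} and Theorem~\ref{thm:interlacing} on the one-parameter family $\A((t+q)^n)$, and Corollary~\ref{cor:interlacingsym}). Your write-up is therefore not competing with an existing argument, and indeed it is not a proof either: it is a strategy sketch in which you yourself identify the missing step.

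On the substance of your strategy: the reformulation is sound. Setting $P_k=\A(t^k(1+t)^{n-k})$, the conjecture is equivalent to the statement that $(P_0,\ldots,P_n)$ is an interlacing sequence in $\mathcal{F}_{n+1}^+$, since then every nonnegative combination is real-rooted. Your Bernstein identity $(t+q)^n=\sum_k\binom{n}{k}(1-q)^k q^{n-k}\,t^k(1+t)^{n-k}$ is correct and does exhibit $\A((t+q)^n)$ as a conic combination of the $P_k$. However, the passage you flag as ``the principal difficulty'' is exactly the content of the conjecture: knowing that a one-parameter curve of convex combinations is interlacing does not, by any general principle, force the extreme rays $P_k$ themselves to form an interlacing sequence. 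Neither the finite-difference idea nor the De~Casteljau decomposition you allude to supplies this; both would require, at minimum, that certain \emph{differences} $\A((t+q)^n)-\A((t+q')^n)$ be real-rooted with controlled sign pattern, which is not a consequence of Theorem~\ref{thm:interlacing}. Likewise, the ``multi-parameter'' extension to $\A\bigl(\prod_i(t+\alpha_i)\bigr)$ with $\alpha_i\in[0,1]$ is itself a special case of the conjecture (take $a_k=e_{n-k}(1-\alpha_1,\ldots,1-\alpha_n)\cdot\text{const}$), so invoking it is circular.

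Two minor corrections. First, the interlacing in $q$ that places $P_0=\tilde A_n$ and $P_n=A_n$ at the endpoints is Theorem~\ref{thm:interlacing}, not Theorem~\ref{thm:main1}. Second, a chain of consecutive interlacings $P_0\preceq P_1\preceq\cdots\preceq P_n$ is not by itself enough to conclude that all nonnegative combinations are real-rooted; one also needs $P_0\preceq P_n$ (Lemma~\ref{lem:PetterPolya}). That extra relation does follow from Theorem~\ref{thm:interlacing} with $p=0$, $q=1$, but you should say so.
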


\section{Combinatorial interpretation and unimodality}\label{sec:combint}
If $\sigma$ is a permutation in the symmetric group $\mathfrak{S}_n$ on $[n]:=\{1,2,\ldots, n\}$, then $\exc(\sigma)= |\{ i : \sigma(i) >i\}|$ is the number of \emph{excedances} in $\sigma$, and $F(\sigma)= \{i : \sigma(i) =i\}$ is the set of \emph{fixed points} of $\sigma$.   Recall that the Eulerian polynomials may be defined as 
$$
A_n(t)= \sum_{\sigma \in \mathfrak{S}_n} t^{\exc(\sigma)} t^{|F(\sigma)|}.
$$

Postnikov~\cite{postnikov2006total} considered the set $\mathfrak{S}_n^2$ of \emph{decorated permutations}, i.e., the set of all permutations of $[n]$ where the fixed points are colored by either $0$ or $1$. 
The \emph{excedance} statistic on $\mathfrak{S}_n^2$  is defined by 
$$
\exc(\sigma)= |\{ i : \sigma(i) >i\}|+ |F_1(\sigma)|,  
$$
where $F_c(\sigma)= \{ i : \sigma(i) = i \mbox{ and $i$ has color } c\}$. It follows that 
\begin{prop}\label{prop:combint}
For real numbers $\theta _1,\ldots, \theta _n$, 
\begin{equation}\label{eq:combint1}
\A\left((t+\theta_1)\cdots(t+\theta_n)\right)= \sum_{k=0}^n e_{n-k}(\theta_1,\ldots, \theta_n)A_k(t)= \sum_{\sigma \in \mathfrak{S}_n^2} t^{\exc(\sigma)} \prod_{i \in F_0(\sigma)}\theta_i \, ,
\end{equation}
where 
$$
e_k(\theta_1,\ldots, \theta_n)= \sum_{1 \leq i_1 <i_2 <\cdots < i_k \leq n} \theta_{i_1} \theta_{i_2}\cdots \theta_{i_k}, 
$$
is the $k$-th elementary symmetric polynomial in $\theta_1,\ldots, \theta_n$. 
\end{prop}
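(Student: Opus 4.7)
The proposition splits into two equalities, so I would handle them in turn.

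For the first equality, I would simply expand the product elementary-symmetrically,
\begin{equation*}
(t+\theta_1)\cdots(t+\theta_n) \;=\; \sum_{k=0}^n e_{n-k}(\theta_1,\ldots,\theta_n)\,t^k,
\end{equation*}
and then apply $\A$ coefficientwise, using linearity and the defining rule $\A(t^k)=A_k(t)$. This step is purely formal.

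For the second equality, the plan is to group the decorated permutations by their set of $0$-colored fixed points. Write
\begin{equation*}
\sum_{\sigma\in\mathfrak{S}_n^2} t^{\exc(\sigma)} \prod_{i\in F_0(\sigma)}\theta_i
\;=\;\sum_{S\subseteq[n]}\Biggl(\prod_{i\in S}\theta_i\Biggr)\sum_{\substack{\sigma\in\mathfrak{S}_n^2\\ F_0(\sigma)=S}} t^{\exc(\sigma)}.
\end{equation*}
A decorated permutation $\sigma$ with $F_0(\sigma)=S$ fixes every element of $S$ (with color $0$) and restricts to a permutation $\tau$ of $[n]\setminus S$ whose fixed points must all be colored $1$. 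Since such fixed points each contribute a factor of $t$ to $\exc(\sigma)$, one has
\begin{equation*}
\exc(\sigma)\;=\;|\{j\in[n]\setminus S:\tau(j)>j\}|+|F(\tau)|.
\end{equation*}
Applying the identity $A_m(t)=\sum_{\tau\in\mathfrak{S}_m} t^{\exc(\tau)}t^{|F(\tau)|}$ stated in the excerpt (with $m=n-|S|$), the inner sum equals $A_{n-|S|}(t)$.

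It then remains to reorganize the sum by $j=|S|$ and reindex $k=n-j$:
\begin{equation*}
\sum_{S\subseteq[n]}\Bigl(\prod_{i\in S}\theta_i\Bigr)A_{n-|S|}(t)
\;=\;\sum_{j=0}^n e_j(\theta_1,\ldots,\theta_n)\,A_{n-j}(t)
\;=\;\sum_{k=0}^n e_{n-k}(\theta_1,\ldots,\theta_n)\,A_k(t),
\end{equation*}
which matches the middle expression. The proof is essentially bookkeeping; the only point requiring a small argument is the identification of the restricted sum as $A_{n-|S|}(t)$, which relies on the fact that every fixed point outside $S$ must be colored $1$ and therefore automatically contributes to the excedance statistic.
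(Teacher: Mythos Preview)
Your proof is correct and is exactly the argument the paper has in mind. The paper gives no explicit proof of this proposition---it merely writes ``It follows that'' before the statement---and your grouping by $F_0(\sigma)=S$ together with the identity $A_m(t)=\sum_{\tau\in\mathfrak{S}_m}t^{\exc(\tau)}t^{|F(\tau)|}$ is precisely the straightforward unwinding of the definitions that phrase points to.
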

Similarly if $d_n(t)= \sum_\sigma t^{\exc(\sigma)}$, where the sum is over all derangements in $\mathfrak{S}_n$, i.e.,  the permutations in $\mathfrak{S}_n$ with no fixed points, then 
\begin{equation}\label{dn}
\A\left((t+\theta_1)\cdots(t+\theta_n)\right) = \sum_{k=0}^n e_{n-k}(t+\theta_1,\ldots, t+\theta_n)d_k(t). 
\end{equation}
Next, we will see that $\A\left((t+\theta_1)\cdots(t+\theta_n)\right)$ is \emph{unimodal} for all real numbers $0\leq \theta _1, \ldots, \theta _n \leq 1$, i.e., if $c_0,c_1,\ldots, c_n$ is the sequence of coefficients then
\[
c_0\leq c_1\leq \cdots \leq c_k \geq \cdots \geq c_n
\]
for some $0\leq k\leq n$. In fact, we will prove that the coefficients satisfy the \emph{alternatingly increasing property}
\[
0\leq c_0\leq c_n \leq c_1\leq c_{n-1}\leq \cdots \leq c_{\lfloor \frac{n+1}{2}\rfloor } \, .
\]

Let $f(t)$ be a polynomial of degree at most $n$, and let $\mathcal{I}_n(f)(t) := t^n f(1/t)$. We may write $f$ as 
\begin{equation}\label{staple}
f(t)= a_n(f)+ t\cdot b_n(f),
\end{equation}
where 
$$
a_n(f)= \frac { f-t\mathcal{I}_n(f)}{1-t} \ \ \ \ \mbox{ and } \ \ \ \ b_n(f)= \frac { \mathcal{I}_n(f)-f}{1-t}. 
$$
Note that $\mathcal{I}_n(a_n(f))=a_n(f)$ and $\mathcal{I}_{n-1}(b_n(f))=b_n(f)$. The presentation~\eqref{staple} which was considered by Stapledon~\cite{Stapledon} is unique with these properties and is called the \emph{symmetric decomposition} or \emph{Stapledon decomposition of $f$ with respect to $n$}. Let $\mathcal{AL}_n$ be the convex cone of all polynomials $f$ of degree at most $n$ such that $a_n(f)$ and $b_n(f)$ have nonnegative and unimodal coefficients. It is not hard to see that $\mathcal{AL}_n$ consists of all polynomials that satisfy the \emph{alternatingly increasing property} (see, e.g.,~\cite[Lemma 2.1]{BJM}).
\begin{thm}\label{thm:alternating}
If $f= \sum_{k=0}^n a_k t^k (1+t)^{n-k}$, where $a_k \geq 0$ for all $0\leq k \leq n$, then $\A(f) \in \mathcal{AL}_n$. In particular, $\A(f)$ is unimodal. 
\end{thm}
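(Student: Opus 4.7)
The plan is to use linearity together with the fact that $\mathcal{AL}_n$ is a convex cone to reduce the statement to showing $P_k(t) := \A(t^k(1+t)^{n-k}) \in \mathcal{AL}_n$ for each individual $k \in \{0,1,\ldots,n\}$. Applying Proposition~\ref{prop:combint} with $\theta_1=\cdots=\theta_k=0$ and $\theta_{k+1}=\cdots=\theta_n=1$ specializes the combinatorial formula to
\[
P_k(t) = \sum_{\sigma \in \mathfrak{S}_n^2 \,:\, F_0(\sigma)\cap[k]=\emptyset} t^{\exc(\sigma)},
\]
a sum over decorated permutations of $[n]$ whose $0$-colored fixed points all lie in $\{k+1,\ldots,n\}$.

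To extract the symmetric decomposition $P_k = a_n(P_k) + t\,b_n(P_k)$, I would partition the admissible decorated permutations into two classes based on a criterion that mirrors the $\mathcal{I}_n$ involution, for instance whether the largest fixed point is $0$- or $1$-colored, or whether $F_0(\sigma)=\emptyset$. The two resulting sub-generating functions, suitably $t$-shifted if necessary, should recover $a_n(P_k)$ and $b_n(P_k)$ with manifestly nonnegative coefficients.

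To establish unimodality of both pieces, I would construct a Foata--Strehl-type valley-hopping $\mathbb{Z}_2^m$-action on each relevant collection of decorated permutations, adapted to the excedance statistic. Such an action partitions the permutations into orbits, each contributing a nonnegative multiple of $t^j(1+t)^{d-2j}$ to the excedance generating function; this yields a gamma-positive expansion of each of $a_n(P_k)$ and $b_n(P_k)$, and palindromic gamma-positive polynomials are automatically unimodal.

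The principal obstacle is constructing the valley-hopping action compatibly with both (i) the coloring constraint $F_0(\sigma)\cap[k]=\emptyset$ and (ii) the partition used to produce the symmetric decomposition. The classical action on $\mathfrak{S}_n$ must be refined to handle the two colors on fixed points and to guarantee that orbits remain within the appropriate sub-collection. Once this is in place, the gamma-positivity, and hence the unimodality, of both parts of the symmetric decomposition follows by the standard orbit argument, giving $P_k \in \mathcal{AL}_n$ and thereby the theorem.
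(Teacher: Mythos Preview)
Your plan diverges substantially from the paper's argument, which is algebraic and needs no orbit construction. The paper treats $a_n$ and $b_n$ via two different expansions of $\A(f)$. For $a_n$ it uses $\mathcal{I}_n(A_k)=t^{n-k-1}A_k$ (for $k\ge 1$) to get $a_n(A_k)=A_k\cdot(1+t+\cdots+t^{n-k-1})$, a product of symmetric unimodal polynomials and hence symmetric unimodal. For $b_n$ it switches to the derangement expansion \eqref{dn}, $\A(f)=\sum_k e_{n-k}(t+\theta_1,\ldots,t+\theta_n)\,d_k(t)$, observes that each $e_k(t+\theta_1,\ldots,t+\theta_n)$ is itself a nonnegative combination of $\{t^i(1+t)^{k-i}\}_i$, computes $b_k(t^i(1+t)^{k-i})=(1+t)^{k-i}(1+t+\cdots+t^{i-1})$ explicitly, and invokes the known facts $\mathcal{I}_m(d_m)=d_m$ and $d_m\in\mathcal{AL}_m$.

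Your outline is not yet a proof: the two load-bearing steps---the bipartition of $\{\sigma\in\mathfrak{S}_n^2:F_0(\sigma)\cap[k]=\emptyset\}$ that is supposed to realize the symmetric decomposition, and the $\Z_2^m$-action certifying $\gamma$-positivity of each piece---are stated as goals, not constructions. There is a concrete obstruction to the first step. The natural $\exc\leftrightarrow n-\exc$ involution on $\mathfrak{S}_n^2$ (invert the permutation and swap the fixed-point colors) sends the constraint $F_0(\sigma)\cap[k]=\emptyset$ to $F_1(\sigma)\cap[k]=\emptyset$, so it does not stabilize your index set; hence there is no evident combinatorial model for $\mathcal{I}_n(P_k)$ inside the same collection, and neither of your two suggested criteria (color of the largest fixed point, or $F_0(\sigma)=\emptyset$) produces pieces with the required palindromicity centers. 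The action constructed later in Theorem~\ref{gammaexp} handles only the symmetric case $k=0$, where $b_n=0$; extending it to yield $\gamma$-positive $a_n(P_k)$ and $b_n(P_k)$ for all $k$ would be strictly stronger than what the theorem asserts and remains to be done in your write-up. Until those constructions are actually supplied and checked, the argument has a genuine gap.
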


\begin{proof}
Since $\mathcal{I}_n (A_k)=t^{n-k-1}A_k$ for all $k\geq 1$, 
$$a_n(A_k) = A_k \cdot \frac {1-t^{n-k}} {1-t},$$
for $k\geq 1$.  Also $a_n (A_0)=1+t+\cdots + t^n$. Thus $a_n(f)$ is unimodal, since the product of two symmetric and unimodal polynomials with nonnegative coefficients is again symmetric and unimodal, see~\cite{StanleySurvey}.

By linearity we may assume $f=\prod_{i=1}^n (t+\theta_i)$, where $\theta_i \in \{0,1\}$ for all $i$. Clearly  $e_k(t+\theta_1, \ldots, t+\theta_n)$ has a nonnegative expansion in the basis $t^i(1+t)^{k-i}$.   Since 
$$
b_k\left(t^i(1+t)^{k-i}\right) = (1+t)^{k-i}\cdot \frac {1-t^{i}} {1-t},
$$
it follows that $b_k(e_k(t+\theta_1, \ldots, t+\theta_n))$ is unimodal and has nonnegative coefficients. Furthermore, 
$\mathcal{I}_m(d_m)=d_m$ and $d_m \in \mathcal{AL}_m$, see~\cite{branden2019symmetric}. Hence  
$$
b_n(e_k(t+\theta_1, \ldots, t+\theta_n)\cdot d_{n-k}(t)) =b_k(e_k(t+\theta_1, \ldots, t+\theta_n))\cdot d_{n-k}(t),
$$
has nonnegative and unimodal coefficients, and so does $\A (f)$ by \eqref{dn}.

\end{proof}

\subsection{The binomial Eulerian polynomial} 
The \emph{binomial Eulerian polynomials} may be defined by $\tilde A _n (t):=\A\left((1+t)^n\right)$, see \cite{Wachs} and the references therein. Any  polynomial $f$ for which $\mathcal{I}_n(f)=f$ may be written as 
$$
f(t)= \sum_{k=0}^{\lfloor n/2 \rfloor} \gamma_k t^k (1+t)^{n-2k},
$$
where the $\gamma_k$'s are real numbers, called \emph{$\gamma$-coefficients}. 
If $\gamma_k \geq 0$ for all $k$, then $f$ is said to be \emph{$\gamma$-positive}. It is known that  $\widetilde{A}_n$ is $\gamma$-positive~\cite{Postnikovstellohedron,Wachs}. Here we will give a different proof of this fact. Let $\sigma \in \mathfrak{S}_n^2$. An element $i \in [n]$ is a \emph{double excedance} if $\sigma(i)>i>\sigma^{-1}(i)$, and a \emph{double anti-excedance} if $\sigma(i)<i<\sigma^{-1}(i)$. Let $C$ be a cycle of $\sigma$ of length at least two, and let $x$ be the largest element of $C$. We may represent $C$ as $x_0x_1x_2\cdots x_{\ell-1}x_\ell$, where $x_0=x_\ell= x$ and  $\sigma(x_j)=x_{j+1}$ for all $0\leq j \leq \ell$. If $i=x_k$ is a double anti-excedance,  let $m$ be the smallest index such that $m > k$ and $x_m < x_k <x_{m+1}$. Let $\sigma'$ be the decorated permutation obtained by moving $x_k$ into the slot between $x_m$ and $x_{m+1}$ in the representation of $C$. Similarly, if $i=x_k$ is a double excedance, then let $m$ be the greatest  index such that $m < k$ and $x_{m-1} > x_k >x_{m}$. Let further $\sigma''$ be the decorated permutation obtained by moving $x_k$ into the slot between $x_{m-1}$ and $x_{m}$ in the representation of $C$. 
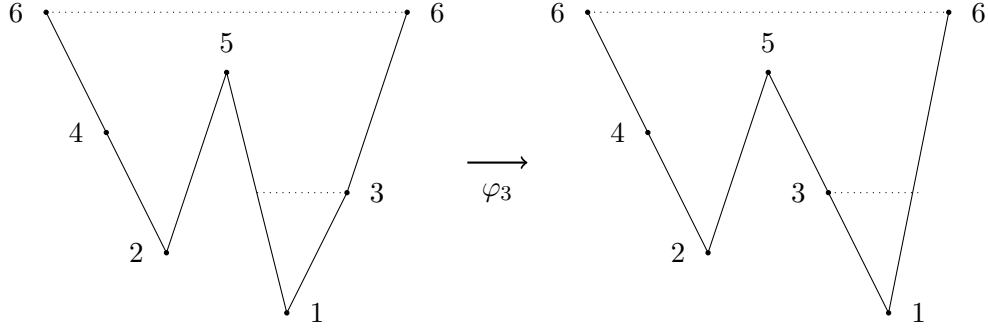
\begin{figure}[h]
\begin{tikzpicture}[scale=0.8]
\draw (0,6)--(1,4)--(2,2)--(3,5)--(4,1)--(5,3)--(6,6);
\draw[dotted] (0,6)--(6,6);
\draw[dotted] (5,3)--(3.5,3);
\filldraw[black] (0,6) circle (1pt);
\filldraw[black] (1,4) circle (1pt);
\filldraw[black] (2,2) circle (1pt);
\filldraw[black] (3,5) circle (1pt);
\filldraw[black] (4,1) circle (1pt);
\filldraw[black] (5,3) circle (1pt);
\filldraw[black] (6,6) circle (1pt);

\draw (-0.5,6) node {$6$};
\draw (0.5,4) node {$4$};
\draw (1.5,2) node {$2$};
\draw (3,5.5) node {$5$};
\draw (4.5,1) node {$1$};
\draw (5.5,3) node {$3$};
\draw (6.5,6) node {$6$};

\draw (9,6)--(10,4)--(11,2)--(12,5)--(13,3)--(14,1)--(15,6);
\draw[dotted] (9,6)--(15,6);
\draw[dotted] (13,3)--(14.5,3);
\filldraw[black] (9,6) circle (1pt);
\filldraw[black] (10,4) circle (1pt);
\filldraw[black] (11,2) circle (1pt);
\filldraw[black] (12,5) circle (1pt);
\filldraw[black] (13,3) circle (1pt);
\filldraw[black] (14,1) circle (1pt);
\filldraw[black] (15,6) circle (1pt);

\draw (8.5,6) node {$6$};
\draw (9.5,4) node {$4$};
\draw (10.5,2) node {$2$};
\draw (12,5.5) node {$5$};
\draw (12.5,3) node {$3$};
\draw (14.5,1) node {$1$};
\draw (15.5,6) node {$6$};

\draw[->,thick] (7,3.5)--(8,3.5);
\draw (7.5,3) node {$\varphi _3$};
\end{tikzpicture}
\caption{The action of $\varphi _3$ on the cycle $x_0x_1x_2x_3x_4x_5x_6=6425136$.}
\label{fig:action}
\end{figure}

For $i \in [n]$, define $\phi_i : \mathfrak{S}_n^2 \to \mathfrak{S}_n^2$ as follows. If $i$ is a fixed point of color $c$, then $\phi_i(\sigma)$ is obtained by changing $i$ to a fixed point of color $1-c$. Otherwise, if $i$ is in a cycle $C$ of length at least two, then 
$$
\phi_i(\sigma) = \begin{cases}
\sigma', &\mbox{ if } i \mbox{ is a double anti-excedance}, \\ 
\sigma'', &\mbox{ if } i \mbox{ is a double excedance}, \\
\sigma, &\mbox{ otherwise.}
\end{cases}
$$
It follows that $\phi_i$ is an involution for each $i$, and $\phi_i \phi_j = \phi_j \phi_i$ for all $i,j$. Moreover $i$ is a double excedance in $\sigma$ if and only if $i$ is a double anti-excedance in $\phi_i(\sigma)$. Hence the maps $\phi_i$, $i \in [n]$, induce an $\Z_2^n$-action $\mathfrak{S}_n^2$, see~\cite{PetterActions}. Let $\mathrm{Orb}(\sigma)$ denote the orbit of $\sigma$ under this action. 

\begin{thm}\label{gammaexp}
If $n$ is a nonnegative integer, then 
$$
\widetilde{A}_n(t)= \sum_{k=0}^{\lfloor n/2 \rfloor} \gamma_{nk} t^k (1+t)^{n-2k},
$$
where $\gamma_{nk}$ is the number of permutations in $\mathfrak{S}_n$ with exactly $k$ excedances and no double excedances. 
\end{thm}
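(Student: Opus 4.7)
The plan is to exploit the $\Z_2^n$-action on $\mathfrak{S}_n^2$ generated by the commuting involutions $\phi_i$ from the discussion preceding the statement. Proposition~\ref{prop:combint} at $\theta_1 = \cdots = \theta_n = 1$ gives $\widetilde{A}_n(t) = \sum_{\sigma \in \mathfrak{S}_n^2} t^{\exc(\sigma)}$, and I would decompose this sum over the orbits of the action. Because the $\phi_i$ commute, the set $M(\sigma) = \{i : \phi_i(\sigma) \neq \sigma\}$ is an invariant of the orbit, and $|\mathrm{Orb}(\sigma)| = 2^{|M(\sigma)|}$.

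The first key step is to verify that any non-trivial $\phi_i$ changes $\exc$ by exactly $\pm 1$. For a fixed point $i$, toggling the color between $0$ and $1$ adds or removes $i$ from $F_1(\sigma)$. For $i = x_k$ a double anti-excedance in a cycle $(x_0 x_1 \cdots x_\ell)$, the old local relations $x_{k-1} > x_k > x_{k+1}$ and $x_m < x_{m+1}$ produce exactly one excedance (at $m$) among the affected positions, while after the move the new relations $x_{k-1} > x_{k+1}$ and $x_m < x_k < x_{m+1}$ produce two (at $m$ and at the new slot of $x_k$). The double-excedance case is symmetric. Combined with the excerpt's assertion that $\phi_i$ swaps double excedances at $i$ with double anti-excedances at $i$, this yields the $\pm 1$ change.

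The second step is to pick a canonical representative $\sigma_0$ in each orbit: the unique element with every fixed point colored $0$ and no double excedances. Existence and uniqueness follow because each non-trivial $\phi_i$ either flips the color of a fixed point or swaps a double excedance at $i$ for a double anti-excedance at $i$. Forgetting the (now uniform) coloring, $\sigma_0$ is a permutation in $\mathfrak{S}_n$ without double excedances. By the $\pm 1$ property of Step 1, the orbit contributes $t^{\exc(\sigma_0)}(1+t)^{|M(\sigma_0)|}$ to the sum.

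It remains to check $|M(\sigma_0)| = n - 2\exc(\sigma_0)$. Here $M(\sigma_0)$ consists of the fixed points together with the double anti-excedances of $\sigma_0$. In any cycle of length at least two, every position of $\sigma_0$ is a peak, a valley, or a double anti-excedance, and a standard cyclic sign-change argument shows that peaks and valleys appear in equal number. Since only valleys contribute excedances in canonical form, summing over cycles gives $\exc(\sigma_0) = k = (\text{number of valleys}) = (\text{number of peaks})$, so $|M(\sigma_0)| = n - 2k$. Summing orbit contributions, indexed by permutations in $\mathfrak{S}_n$ with no double excedances, yields the stated identity. The main obstacle is the neighborhood bookkeeping of Step 1: one must show that the insertion/deletion alters exactly one excedance, which requires careful case analysis of the affected triples $(x_{k-1},x_k,x_{k+1})$ and $(x_m,x_k,x_{m+1})$ to rule out inadvertent creation or cancellation of excedances elsewhere.
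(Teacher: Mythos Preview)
Your proposal is correct and follows essentially the same approach as the paper: decompose $\widetilde{A}_n(t)=\sum_{\sigma\in\mathfrak{S}_n^2}t^{\exc(\sigma)}$ over orbits of the $\Z_2^n$-action, pick as canonical representative the element $\hat\sigma$ with no double excedances and all fixed points colored $0$, and verify that the orbit sum equals $t^{\exc(\hat\sigma)}(1+t)^{n-2\exc(\hat\sigma)}$. The paper's proof asserts these facts tersely; you have supplied the bookkeeping (the $\pm1$ change of $\exc$ under each nontrivial $\phi_i$, and the peak/valley count giving $|M(\sigma_0)|=n-2\exc(\sigma_0)$) that the paper leaves implicit.
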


\begin{proof}
Let $\sigma \in \mathfrak{S}_n^2$ be a decorated permutation, and let $\hat{\sigma}$ be the unique permutation in $\mathrm{Orb}(\sigma)$ with no double excedances and no fixed points of color $1$. Then 
$$
\sum_{\pi \in \mathrm{Orb}(\sigma)}t^{\exc(\sigma)}= t^{\exc(\hat{\sigma})} (1+t)^{a},
$$
where $a$ is the number of fixed points plus the number of double anti-excedances in $\hat{\sigma}$. It follows that 
$a=n-2\exc(\hat{\sigma})$, from which the theorem follows.

\end{proof}

\section{Real-rootedness and Interlacing}\label{sec:real-rooted}
If $f(t)=(t+q)^n$, then \eqref{eq:combint1} specializes to 
\[
\A\left((t+q)^n\right) = \sum_{k=0}^n \binom n k q^{n-k}A_k(t) = \sum _{\sigma \in \mathfrak{S}_n}t^{\exc (\sigma)}(t+q)^{|F(\sigma)|} \, .
\]
In this section we will prove that $\A\left((t+q)^n\right)$ is real-rooted whenever $0\leq q\leq 1$.

Let $f,g\in \mathbb{R}[t]$ be two polynomials with positive leading coefficients and real zeros only. Let further $\alpha _1\geq \alpha _2 \geq \cdots \geq \alpha _n$ be the zeros of $f$, and let $\beta_1\geq \beta _2\geq \cdots \geq \beta _m$ be the zeros of $g$. We say that $g$ \emph{interlaces} $f$, denoted by $g\preceq f$, if 
\[
\cdots \leq \beta _2 \leq \alpha _2 \leq \beta_1 \leq \alpha _1.
\]
The polynomial $g$ \emph{strictly interlaces} $f$ (written $f \prec g$) if all the non-strict inequalities between the zeros can be replaced by strict inequalities. By definition, $\deg g\leq \deg f\leq \deg g +1$. By convention, we also define $0\preceq f$ and $f\preceq 0$, where $0$ denotes the constant zero polynomial. The following lemma collects well-known basic properties of interlacing polynomials. 
\begin{lem}[{\cite[Section 3]{Wagner}}]\label{lem:basics}
Let $f,g,h\in \mathbb{R}[t]$ be polynomials with positive leading coefficients and real-zeros only. Then
\begin{itemize}
\item[(i)] if $f,g$ have only nonnegative coefficients, then $f\preceq g$ if and only if $g\preceq tf$.
\item[(ii)] $f\prec g$ if and only if $cf\prec dg$ for all $c,d>0$.
\item[(iii)] if $f\prec g$ and $f\preceq h$ then $f\prec g+h$.
\item[(iv)] if $f\prec h$ and $g\preceq h$ then $f+g \prec h$.
\end{itemize}
Furthermore, statements (i)-(iv) remain correct if $\prec$ is replaced by $\preceq$.
\end{lem}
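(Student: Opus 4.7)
My plan is to deduce all four assertions from the sign-at-the-zeros characterization of interlacing: if $f$ has positive leading coefficient and real zeros $\alpha_1\geq\alpha_2\geq\cdots\geq\alpha_n$, all simple, then for any polynomial $g$ with positive leading coefficient and only real zeros one has $g\preceq f$ if and only if $(-1)^{i-1}g(\alpha_i)\geq 0$ for $i=1,\ldots,n$ together with $\deg g\in\{n-1,n\}$; strict interlacing $g\prec f$ corresponds to strict inequalities. Multiple zeros of $f$ are handled by perturbing to simple zeros and taking a limit, and the convention $0\preceq f\preceq 0$ covers the degenerate case.

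For statement (i), the nonnegativity hypothesis on the coefficients forces all zeros of $f$ and $g$ to lie in $(-\infty,0]$. Hence $0$ is at least as large as every zero of $f$, so the zeros of $tf$ are obtained from those of $f$ by prepending $0$ as the largest zero. A direct comparison then shows that the interlacing inequalities for $f\preceq g$ and for $g\preceq tf$ become the same system once $0$ is inserted as the top zero of $tf$; the extra constraint $\beta_1\leq 0$ arising in $g\preceq tf$ is automatic from nonnegativity of the coefficients of $g$, and the degree conditions on the two sides also match.

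Statement (ii) is immediate since scaling by positive constants does not alter the zero set. For (iii), assume $f\prec g$ and $f\preceq h$. Evaluating at each simple zero $\alpha_i$ of $f$ gives $(-1)^{i-1}g(\alpha_i)>0$ and $(-1)^{i-1}h(\alpha_i)\geq 0$, whence $(-1)^{i-1}(g+h)(\alpha_i)>0$. The intermediate value theorem then produces a zero of $g+h$ strictly between $\alpha_{i+1}$ and $\alpha_i$ for $i=1,\ldots,n-1$, and comparing leading coefficients and degrees of $g$ and $h$ pins down the sign of $g+h$ at $\pm\infty$, placing the remaining zero(s) of $g+h$ in the correct outer positions to give $f\prec g+h$. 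Statement (iv) is the dual argument: evaluating $f+g$ at the zeros of $h$, the strict interlacing $f\prec h$ yields strict signs and $g\preceq h$ yields matching weak signs, so $f+g$ takes strictly alternating signs in the required pattern, producing $f+g\prec h$.

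The main obstacle is degree bookkeeping: the polynomials $f,g,h$ may have different degrees, and strict interlacing imposes specific constraints that must be carried through when passing to sums. Once the multiplicity edge cases are dealt with by continuity and the two sub-cases $\deg g=\deg f$ versus $\deg g=\deg f-1$ (and analogously for $h$) are distinguished, all four items reduce to the sign analysis above, which is essentially the standard argument in Wagner's survey referenced in the statement. The non-strict versions of (i)--(iv) are obtained by the same sign analysis with all strict inequalities replaced by weak ones, or equivalently by an additional limiting argument.
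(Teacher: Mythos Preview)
The paper does not supply its own proof of this lemma; it is stated with a citation to Wagner's survey and used as a black box. So there is no ``paper's proof'' to compare against. Your sketch is the standard sign-alternation argument from the interlacing literature and is essentially correct: the characterization of $g\preceq f$ (respectively $g\prec f$) via the weak (respectively strict) sign pattern of $g$ at the zeros of $f$, together with the intermediate value theorem and leading-term behavior at $\pm\infty$, does yield all four items once the degree cases are separated and multiple zeros are handled by perturbation, exactly as you outline. One small remark: in part (iii) you write $(-1)^{i-1}g(\alpha_i)>0$, but with the paper's convention for $f\prec g$ (zeros of $f$ sitting below those of $g$) the correct sign is $(-1)^{i}g(\alpha_i)>0$; this does not affect the argument since $g$ and $h$ carry the same sign pattern and only their sum matters. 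Otherwise your reasoning, including the explicit check in (i) that the extra inequality $\beta_1\le 0$ introduced by the zero of $tf$ at $t=0$ is automatic from nonnegativity of the coefficients of $g$, is sound.
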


A sequence of polynomials $(f_1,\ldots, f_n)$ is called an \emph{interlacing sequence} if $f_i\preceq f_j$ for all $1\leq i\leq j\leq n$. The following lemma shows that not all pairs of polynomials $f_i$ and $f_j$ have to be checked to guarantee an interlacing sequence.
\begin{lem}[{\cite[Lemma 2.3]{PetterPolya}}]\label{lem:PetterPolya}
Let $f_1,f_2,\ldots,f_n$ be polynomials such that $f_i\preceq f_{i+1}$ for all $1\leq i\leq n-1$, and furthermore $f_1\preceq f_n$. Then $(f_1,\ldots, f_n)$ is an interlacing sequence.
\end{lem}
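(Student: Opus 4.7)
The plan is to prove the interlacing $f_i \preceq f_j$ for every pair $1 \leq i \leq j \leq n$ by direct comparison of the zeros. The point is that the single extra hypothesis $f_1 \preceq f_n$, chained together with the consecutive interlacings, is enough to simultaneously control both ends of each necessary inequality.

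First I would record a degree constraint. Since $f_i \preceq f_{i+1}$ forces $\deg f_{i+1}\in\{\deg f_i,\deg f_i+1\}$, the degrees are weakly increasing in $i$, and the hypothesis $f_1\preceq f_n$ further forces $\deg f_n \leq \deg f_1 + 1$. Setting $d:=\deg f_1$, every $f_i$ therefore has degree $d$ or $d+1$, with at most one jump in between. (Zero polynomials among the $f_i$, if any, cause no issue thanks to the conventions $0\preceq f$ and $f\preceq 0$.) Denote the zeros of $f_i$ in weakly decreasing order by $\alpha^{(i)}_1 \geq \alpha^{(i)}_2 \geq \cdots \geq \alpha^{(i)}_{\deg f_i}$. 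The consecutive interlacings $f_i\preceq f_{i+1}$ then yield
\[
\alpha^{(i+1)}_k \;\geq\; \alpha^{(i)}_k \;\geq\; \alpha^{(i+1)}_{k+1}
\]
for each $k$ in the applicable range. In particular, for fixed $k$ the quantity $\alpha^{(i)}_k$ is weakly non-decreasing in $i$, and chaining gives $\alpha^{(j)}_k \geq \alpha^{(i)}_k$ whenever $i \leq j$. This already supplies one of the two families of inequalities encoding $f_i \preceq f_j$.

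For the remaining inequalities $\alpha^{(i)}_k \geq \alpha^{(j)}_{k+1}$, the idea is to chain through the two endpoints $f_1$ and $f_n$:
\[
\alpha^{(i)}_k \;\geq\; \alpha^{(1)}_k \;\geq\; \alpha^{(n)}_{k+1} \;\geq\; \alpha^{(j)}_{k+1}.
\]
The outer two inequalities are instances of the monotonicity just established (applied respectively to the $k$-th zero along $1,\ldots,i$ and to the $(k+1)$-th zero along $j,\ldots,n$), while the middle inequality is precisely what the hypothesis $f_1\preceq f_n$ provides. The main obstacle I expect is the bookkeeping required to check that every zero appearing in this chain is defined, that is, that the indices lie within the range permitted by the corresponding degrees, in particular when the degree jump from $d$ to $d+1$ occurs somewhere in the middle of the sequence. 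Since the hypothesis $f_1\preceq f_n$ already reaches across the whole sequence, the relevant range turns out to be $1\leq k \leq d$ in every case, which is exactly the range needed to witness the interlacing $f_i\preceq f_j$.
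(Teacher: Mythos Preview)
The paper does not supply a proof of this lemma; it is quoted verbatim from \cite[Lemma~2.3]{PetterPolya} and used as a black box. Your direct argument via zero bookkeeping is correct and is essentially the standard proof: the consecutive interlacings give monotonicity of the $k$-th zero in $i$, and the single global hypothesis $f_1\preceq f_n$ provides the bridge $\alpha^{(1)}_k \geq \alpha^{(n)}_{k+1}$ needed to close the chain $\alpha^{(i)}_k \geq \alpha^{(1)}_k \geq \alpha^{(n)}_{k+1} \geq \alpha^{(j)}_{k+1}$. The degree analysis (all degrees equal to $d$ or $d+1$, with at most one jump) is exactly what one needs to ensure every zero in the chain is defined; in particular, whenever $\alpha^{(j)}_{k+1}$ is required (i.e., $k+1\leq \deg f_j$), the weak increase of degrees forces $k+1\leq \deg f_n$ as well, so $\alpha^{(n)}_{k+1}$ exists. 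There is nothing to add.
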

Let $\mathcal F _n^+$ be the family of interlacing sequences of length $n$  of polynomials with only nonnegative coefficients. The following theorem characterizes all matrices $G=\{G_{i,j}(t)\}_{i,j}\in \mathbb{R}[t]^{m\times n}$ that preserve interlacing sequences when acting by multiplication, that is,
\begin{eqnarray*}
G&\colon & \mathcal F _n ^+ \longrightarrow \mathcal F_m^+ \\
&&(f_1,\ldots, f_n)^T \mapsto   G\cdot (f_1,\ldots, f_n)^T
\end{eqnarray*}

\begin{thm}[{\cite[Theorem 8.5]{PetterUnimodality}}]\label{thm:charinterlacingpres}
Let $G\in \mathbb{R}[t]^{m\times n}$ be a matrix of polynomials. Then $G\colon  \mathcal F _n ^+ \rightarrow \mathcal F_m^+$ if and only if 
\begin{itemize}
\item[(i)] $G_{i,j}(t)$ has nonnegative coefficients for all $1\leq i\leq m$ and all $1\leq j\leq n$; and
\item[(ii)] for all $\lambda,\mu >0$, $1\leq i<j\leq n$ and $1\leq k<\ell \leq m$
\[
(\lambda t+\mu)G_{k,j}(t)+G_{\ell, j}(t)\preceq (\lambda t+\mu) G_{k,i}(t)+G_{\ell, i}(t) \, .
\]
\end{itemize}
\end{thm}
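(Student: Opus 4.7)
The plan is to prove the two directions of this equivalence separately, with necessity following from well-chosen test sequences and sufficiency being the substantive direction.

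For necessity, I would feed $G$ specific interlacing inputs. The standard basis sequences $e_j = (0,\dots,0,1,0,\dots,0)$ with the $1$ in position $j$ lie in $\mathcal{F}_n^+$ by the conventions $0 \preceq f$ and $f \preceq 0$; their $G$-images are the columns of $G$, which must then be interlacing sequences with nonnegative coefficients, giving (i). For (ii), fix indices $i < j$ and $k < \ell$ and build a test sequence supported only at positions $i$ and $j$, of the form $f_i = a$, $f_j = b$ with $a \preceq b$; then the $k$- and $\ell$-th output rows are $G_{k,i}a + G_{k,j}b$ and $G_{\ell,i}a + G_{\ell,j}b$, which must interlace. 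Choosing the pair $(a,b)$ to involve the factor $\lambda t + \mu$ (the natural candidates being constants and degree-one polynomials with prescribed root) and then applying the swap $f \preceq g \iff g \preceq tf$ from Lemma~\ref{lem:basics}(i), together with Lemma~\ref{lem:basics}(iii)--(iv), should rearrange the output interlacing into exactly the inequality asserted in (ii).

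For sufficiency, take an interlacing input $(f_1,\dots,f_n) \in \mathcal{F}_n^+$ and set $g_k = \sum_p G_{k,p} f_p$. By Lemma~\ref{lem:PetterPolya} it suffices to check $g_k \preceq g_{k+1}$ for $1 \leq k < m$ together with $g_1 \preceq g_m$. I would proceed by induction on $n$: the base case $n = 1$ is exactly (i), and the inductive step isolates the contribution of one additional column and invokes (ii) with $\lambda t + \mu$ chosen to encode the interlacing data of the adjacent input pair being peeled off. Lemma~\ref{lem:basics}(iii)--(iv), which propagates interlacing through nonnegative sums along compatible chains, then reassembles the partial results into the desired output interlacing.

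The principal obstacle is the asymmetric form of condition (ii): its two sides mix the same pair of rows $k < \ell$ with the common weight $\lambda t + \mu$, but they draw from different columns $j$ and $i$. The input interlacings come naturally as $f_i \preceq f_j$ comparing adjacent \emph{columns} of the input vector at a fixed row, while the needed output interlacings compare \emph{rows} of $Gf$, so a genuine row-column transposition is required. The swap $f \preceq g \iff g \preceq tf$ of Lemma~\ref{lem:basics}(i) is the mechanism for this transposition, and the freedom in choosing $\lambda,\mu > 0$ reflects the range of linear forms that can arise as ``bridges'' when converting input data into output constraints; pinning down the precise choice of $(\lambda,\mu)$ from the shape of the interlacing input is the technical heart of both directions.
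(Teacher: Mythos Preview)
This theorem is not proved in the paper at all: it is quoted verbatim from \cite[Theorem~8.5]{PetterUnimodality} and used as a black box in the proof of Theorem~\ref{thm:main1}. There is therefore no proof in the paper to compare your proposal against. The paper's only engagement with the content of the result is the observation, implicit in the application, that condition~(ii) is really a statement about every $2\times 2$ submatrix of $G$, so that checking the eight displayed $2\times 2$ matrices suffices.

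As for your sketch on its own merits: the necessity direction is essentially right in spirit, and testing on sequences supported in two positions is the standard move. The sufficiency direction is where the real work lies, and your plan is too vague to be convincing. The phrase ``invokes (ii) with $\lambda t+\mu$ chosen to encode the interlacing data of the adjacent input pair'' hides the entire argument: a single linear factor cannot encode the interlacing of two polynomials of arbitrary degree, so you will need to localize (e.g., work root by root, or use a sign-change/Wronskian characterization of $\preceq$) rather than make one global choice of $(\lambda,\mu)$. Moreover, the swap $f\preceq g \iff g\preceq tf$ from Lemma~\ref{lem:basics}(i) does not by itself perform a ``row--column transposition''; you will need a genuine characterization of $\mathcal{F}_n^+$ (for instance, that $(f_1,\ldots,f_n)\in\mathcal{F}_n^+$ iff every nonnegative combination $\sum_{p\ge i}c_pf_p$ interlaces every $\sum_{p\le i}d_pf_p$, or an equivalent cone/duality statement) to bridge the gap. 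If you want to reconstruct the proof, consult the cited source directly.
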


The following theorem refines results by Haglund and Zhang~\cite{HaglundZhang} on binomial Eulerian polynomials for colored permutations. 
\begin{thm}\label{thm:main1}
For all $n\geq 0$ and $q\in (0,1]$, 
\[
\A\left((t+q)^n\right) \prec \A\left((t+q)^{n+1}\right)\, .
\]
In particular, $T\left((t+q)^n\right)$ has only real and distinct zeros.
\end{thm}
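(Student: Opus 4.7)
The plan is to argue by induction on $n$, using an auxiliary interlacing family of polynomials together with the matrix-interlacing theorem (Theorem~\ref{thm:charinterlacingpres}). Define
\begin{equation*}
v_n^{(k)}(t) := \A\bigl(t^k(t+q)^{n-k}\bigr), \qquad 0 \le k \le n,
\end{equation*}
so that $v_n^{(0)} = \A((t+q)^n)$ is the polynomial of interest, $v_n^{(n)} = A_n(t)$, and every $v_n^{(k)}$ has degree $n$. Expanding $(t+q)^{n+1-k} = (t+q)(t+q)^{n-k}$ and using linearity of $\A$ yields the key recursion
\begin{equation*}
v_{n+1}^{(k)} = v_{n+1}^{(k+1)} + q\,v_n^{(k)} \quad (0 \le k \le n), \qquad v_{n+1}^{(n+1)} = A_{n+1}(t).
\end{equation*}

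First I would prove by induction on $n$ that $(v_n^{(0)}, v_n^{(1)}, \ldots, v_n^{(n)})$ is an interlacing sequence in $\mathcal F_{n+1}^+$ with \emph{strict} adjacent interlacings. The base case $n = 1$ is immediate from $v_1^{(0)} = t+q$ and $v_1^{(1)} = t$ (whose zeros $-q$ and $0$ strictly interlace because $q > 0$). For the inductive step, I would construct a matrix $G_n \in \R[t]^{(n+2)\times(n+1)}$ mapping the level-$n$ family to the level-$(n+1)$ family: most rows have the scalar entries $q$ and $1$ coming from the main recursion, while the row producing $v_{n+1}^{(n+1)} = A_{n+1}(t)$ has entries polynomial in $t$ obtained by writing $A_{n+1}(t)$ as a polynomial combination of the $v_n^{(j)}$'s---for instance via the classical Eulerian recurrence $A_{n+1}(t) = (n+1)t\,A_n(t) + t(1-t)\,A_n'(t)$ together with the invertible triangular system expressing each $A_j$ in terms of $v_n^{(n)}, v_n^{(n-1)}, \ldots, v_n^{(j)}$ when $q > 0$. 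Applying Theorem~\ref{thm:charinterlacingpres} to $G_n$ (after verifying its conditions) propagates the interlacing-sequence property to level $n+1$.

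With the inductive claim in hand, the theorem follows from a single invocation of Lemma~\ref{lem:basics}(iii): from $v_{n+1}^{(0)} = v_{n+1}^{(1)} + q v_n^{(0)}$, combined with the strict cross-level interlacing $v_n^{(0)} \prec v_{n+1}^{(1)}$ established alongside the induction (itself deduced by a short cascade through the recursions $v_{n+1}^{(k)} = v_{n+1}^{(k+1)} + q v_n^{(k)}$ and the level-$n$ interlacing-sequence property) and the trivial $v_n^{(0)} \preceq q v_n^{(0)}$, we obtain $v_n^{(0)} \prec v_{n+1}^{(0)}$, i.e., $\A((t+q)^n) \prec \A((t+q)^{n+1})$.

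The hardest part will be verifying condition (ii) of Theorem~\ref{thm:charinterlacingpres} for the matrix $G_n$, particularly for the row encoding $A_{n+1}(t)$ with its polynomial entries. This requires case analysis combining the positivity $q > 0$, the classical strict interlacing $A_n \prec A_{n+1}$ of consecutive Eulerian polynomials, and the level-$n$ inductive hypothesis. The restriction $q \in (0, 1]$ enters essentially in these inequalities---it is precisely what separates the statement here from the counterexamples to Brenti's conjecture exhibited in Proposition~\ref{prop:disproveconj} for larger values.
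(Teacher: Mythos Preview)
Your auxiliary family $v_n^{(k)}=\A(t^k(t+q)^{n-k})$ and the recursion $v_{n+1}^{(k)}=v_{n+1}^{(k+1)}+q\,v_n^{(k)}$ are correct, and the overall architecture---build an interlacing sequence at each level and push it forward by a matrix satisfying Theorem~\ref{thm:charinterlacingpres}---mirrors the paper's strategy. However, the crucial step is never carried out. Unrolling your recursion gives
\[
v_{n+1}^{(k)} \;=\; A_{n+1}(t)\;+\;q\sum_{j=k}^{n}v_n^{(j)},
\]
so every row of your proposed matrix $G_n$ contains the same block expressing $A_{n+1}$ in terms of $v_n^{(0)},\ldots,v_n^{(n)}$. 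Since $\deg A_{n+1}=n+1$ while each $v_n^{(j)}$ has degree $n$, at least one entry must be a genuine polynomial in $t$, and for Theorem~\ref{thm:charinterlacingpres} to apply all entries must have nonnegative coefficients and satisfy the $2\times2$ interlacing tests. Your only concrete suggestion, the Eulerian recurrence $A_{n+1}(t)=(n+1)t\,A_n(t)+t(1-t)A_n'(t)$, already introduces the factor $t(1-t)$ with a negative coefficient, and $A_n'$ is not one of your $v_n^{(j)}$'s; expressing it back through the triangular system produces further sign problems. You give no candidate expression, no verification of condition~(ii), and---most tellingly---no point at which the hypothesis $q\le 1$ is actually used (you only assert it ``enters essentially'').

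By contrast, the paper's proof makes a different choice of auxiliary family: via Steingr\'imsson's bijection it passes to the descent/$\bad$ picture, refines by the \emph{first letter} of the permutation, and (after a harmless normalization) obtains polynomials $p_{n,i}$ satisfying a purely first-order recursion $(p_{n+1,1},\ldots,p_{n+1,n+1})^T=G\,(p_{n,1},\ldots,p_{n,n})^T$ with an \emph{explicit} $(n+1)\times n$ matrix $G$ whose entries lie in $\{1,t,t+q\}$. The $2\times 2$ checks then reduce to eight concrete cases, two of which factor as
\[
\begin{bmatrix}t+q&1\\t&t\end{bmatrix}=\begin{bmatrix}q&1\\t&0\end{bmatrix}\begin{bmatrix}1&1\\t&1-q\end{bmatrix},\qquad
\begin{bmatrix}t+q&1\\t&1\end{bmatrix}=\begin{bmatrix}1&1\\0&1\end{bmatrix}\begin{bmatrix}q&0\\t&1\end{bmatrix},
\]
and it is precisely the nonnegativity of $1-q$ here that encodes the restriction $q\in(0,1]$. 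Until you produce an explicit $G_n$ of this kind for your family and locate where $q\le 1$ is used, the proposal remains a plan rather than a proof.
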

\begin{proof}
The proof uses similar techniques as in \cite{HaglundZhang}.

By a bijection due to Steingr\'imsson~\cite[Appendix]{Steingrimsson}, 
\[
\sum _{\sigma \in \mathfrak{S}_n}t^{\exc (\sigma)}(t+q)^{|F(\sigma)|}=\sum _{\sigma \in \mathfrak{S}_n}t^{\des (\sigma)}(t+q)^{\bad(\sigma)} \, ,
\]
where 
\begin{align*}
\des(\sigma) &= |\{ i \in [n-1]: \sigma(i) > \sigma(i+1)\}|, \mbox{ and } \\
\bad (\sigma) &= \left|\{k\in [n]\colon \sigma (k-1)<\sigma (k)< \sigma(m) \text{ for all } m>k\}\right|.
\end{align*}

We consider the polynomials
\[
f_{n,i} =\sum _{\sigma \in \mathfrak{S}_n\atop \sigma (1)=i}t^{\des (\sigma)}(t+q)^{\bad(\sigma)} \, .
\]
By observing  the change of $\des (\sigma)$ and $\bad(\sigma)$ while inserting a letter $i$ in first position of a permutation on $n$ letters in one line notation (after shifting all letters $j\geq i$ to $j+1$), we obtain the recursions
\begin{eqnarray*}
f_{n+1,1} &=& (t+q)\sum _{k=1}^n f_{n,k}, \\
f_{n+1,i} & =& \frac{t}{t+q}f_{n,1}+\sum_{k=2}^{i-1}tf_{n,k}+\sum_{k=i}^{n}f_{n,k}, \text{ for }2\leq i\leq n+1 \, ,
\end{eqnarray*}
Defining $p_{n,1} =\frac{f_{n,1}}{t+q}$ and $p_{n,i} =f_{n,i}$ for $i>1$, we obtain
\begin{eqnarray}
p_{n+1,1} &=& (t+q)p_{n,1}+\sum _{k=2}^n p_{n,k}, \ \ \ \mbox{ and } \label{eqn:recursion1}\\
p_{n+1,i} & =& \sum_{k=1}^{i-1}tp_{n,k}+\sum_{k=i}^{n}p_{n,k}, \text{ for }2\leq i\leq n+1 \label{eqn:recursion2}\, .
\end{eqnarray}
We claim that $(p_{n,1},\ldots,p_{n,n})$ is an interlacing sequence for all $n\geq 1$. If $n=1$, then the claim holds trivially since $f_{1,1}=t+q$ and $p_{1,1}=1$.

For $n\geq 1$, we observe that \eqref{eqn:recursion1} and \eqref{eqn:recursion2} are equivalent to
\[
\begin{bmatrix}
p_{n+1,1}^{q}\\
p_{n+1,2}^{q}\\
\vdots\\
p_{n+1,n+1}^{q}\\
\end{bmatrix}
=
G
\begin{bmatrix}
p_{n,1}^{q}\\
p_{n,2}^{q}\\
\vdots\\
p_{n,n}^{q}\\
\end{bmatrix}
\]
where
\[
G=
\begin{bmatrix}
t+q &1&\cdots &1\\
t&1&\cdots &1\\
t&t&\cdots &1\\
\vdots&&\ddots&\\
t&t&\cdots &t
\end{bmatrix}
\, .
\]
To prove the claim it suffices to show that $G$ preserves interlacing of sequences of polynomials with nonnegative coefficients. By Theorem~\ref{thm:charinterlacingpres}, it suffices to check this for all $2\times 2$ submatrices of $G$:
\[
\begin{bmatrix}
t&t\\
t&t
\end{bmatrix}
,
\begin{bmatrix}
t&1\\
t&t
\end{bmatrix}
,
\begin{bmatrix}
1&1\\
t&t
\end{bmatrix}
,
\begin{bmatrix}
1&1\\
t&1
\end{bmatrix}
,
\begin{bmatrix}
t&1\\
t&1
\end{bmatrix}
,
\begin{bmatrix}
1&1\\
1&1
\end{bmatrix}
,
\begin{bmatrix}
t+q&1\\
t&t
\end{bmatrix}
,
\begin{bmatrix}
t+q&1\\
t&1
\end{bmatrix}
\, .
\]
The first six matrices have been shown to preserve interlacing sequences in~\cite[Corollary 8.7]{PetterUnimodality}. The last two matrices can be factored into matrices that preserve interlacing sequences. Indeed,
\begin{eqnarray*}
\begin{bmatrix}
t+q &1\\
t&t
\end{bmatrix}
&=&
\begin{bmatrix}
q &1\\
t&0
\end{bmatrix}
\begin{bmatrix}
1 &1\\
t&1-q
\end{bmatrix}
 \, ,\\
\begin{bmatrix}
t+q &1\\
t&1
\end{bmatrix}
&=&
\begin{bmatrix}
1 &1\\
0&1
\end{bmatrix}
\begin{bmatrix}
q &0\\
t&1
\end{bmatrix}
\, .
\end{eqnarray*}
Since $0\leq q\leq 1$, all factors have only nonnegative coefficients. To see that all factors preserve interlacing we furthermore need to check the second condition in Theorem~\ref{thm:charinterlacingpres}.  For example, for the first factor in the first product we need to check that for all $\mu,\lambda >0$
\[
(\lambda t+\mu)\cdot 1 + 0 \preceq (t\lambda +\mu )\cdot q+t \, .
\]
This is equivalent to 
\[
-\frac{\mu}{\lambda} \leq -\frac{\mu q}{\lambda q +1}
\]
which is true. The other three factors preserve interlacing sequences by a similar argument.

In summary, all factors and thus also their products preserve interlacing. Therefore, $(p_{n,1},\ldots,p_{n,n})$ is an interlacing sequence. 

We furthermore claim that $p_{n,1}$ strictly interlaces $p_{n,2}$ for all $n\geq 2$ whenever $q>0$. We again argue by induction. For $n=2$ we have $p_{2,1}=t+q$ and $p_{2,2}=t$ and thus the claim holds. For $n\geq 2$. we obtain
\[
p_{n+1,2}=tp_{n,1}+\sum _{i=2}^n p_{n,i} \succ tp_{n,1}+\sum _{i=2}^n p_{n,i} +qp_{n,1} = p_{n+1,1}
\]
by Lemma~\ref{lem:basics}, assuming that $p_{n,1}\prec p_{n,2}$.

To complete the proof, we observe that 
\begin{align*}
\A\left((t+q)^{n+1}\right) &=\sum _{i=1}^{n+1} f_{n+1,i}=(t+q)p_{n+1,1}+\sum _{i=2}^{n+1} p_{n+1,i}  \\
&\succeq p_{n+1,1}=(t+q)p_{n,1}+\sum _{i=2}^n p_{n,i} \\
&=\A\left((t+q)^{n}\right),
\end{align*}
since $p_{n+1,1}$ interlaces $tp_{n+1,1}$ as well as $p_{n+1,i}$ for $i>1$. Since $q>0$ the polynomial $p_{n+1,1}$ strictly interlaces $p_{n+1,2}$, and thus $\succeq$ can be replaced with $\succ$ by Lemma~\ref{lem:basics}. This completes the proof. \qedhere

\end{proof}

\begin{thm}\label{thm:interlacing}
For all $0\leq p<q\leq 1$, 
\begin{equation}
\A ((t+q)^n)\prec \A ((t+p)^n) 
\end{equation}
and
\begin{equation}\label{eq:reverse}
\mathcal I _n (\A ((t+p)^n) \preceq \A ((t+p)^n).
\end{equation}
If $p\not \in \{0,1\}$ then $\preceq$ can be replaced by $\prec$ in Equation~\eqref{eq:reverse}.
\end{thm}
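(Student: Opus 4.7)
I would try to deduce both interlacings by mining the matrix--interlacing machinery set up in the proof of Theorem~\ref{thm:main1}, together with the following identity obtained by the binomial theorem and linearity of $\A$:
\begin{equation}\label{eq:plan-key}
\A((t+q)^n) \;=\; \sum_{k=0}^n \binom{n}{k}(q-p)^k\,\A((t+p)^{n-k}).
\end{equation}
Writing $g_m:=\A((t+p)^m)$, Theorem~\ref{thm:main1} (together with the classical result of Frobenius when $p=0$) gives $g_m\prec g_{m+1}$ for every $m\ge 0$, and \eqref{eq:plan-key} expresses $\A((t+q)^n)$ as a nonnegative linear combination of the $g_m$ in which the coefficient $n(q-p)$ of $g_{n-1}$ is strictly positive.

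For Part~1 the most natural plan would be to show that $(g_0,\ldots,g_n)$ is an interlacing sequence; then any nonnegative combination would be interlaced by $g_n=\A((t+p)^n)$, and strictness would follow from $g_{n-1}\prec g_n$ and Lemma~\ref{lem:basics}(iv). However, a direct computation at $p=1,n=4$ (where $\tilde A_4(-1)=5>0$, placing the zero $-1$ of $g_1$ outside $[\alpha_2,\alpha_1]$ of $g_4$) shows that $g_1\preceq g_n$ can \emph{fail}, so the hypothesis of Lemma~\ref{lem:PetterPolya} is not available. I would therefore fall back on the interlacing sequence $(p_{n+1,1}^p,\ldots,p_{n+1,n+1}^p)$ from the proof of Theorem~\ref{thm:main1}, noting that $g_n=p_{n+1,1}^p$, and attempt to propagate a joint interlacing property across the two parameters $p$ and $q$ by induction on $n$, using a block-matrix extension of the matrix $G$ in Theorem~\ref{thm:main1} and verifying its $2\times 2$ submatrices against Theorem~\ref{thm:charinterlacingpres}. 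The strict inequality $\prec$ would then come from the factorizations with $q>0$ exactly as in the proof of Theorem~\ref{thm:main1}.

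For Part~2, the Stapledon decomposition $F_p=a_n(F_p)+t\,b_n(F_p)$ satisfies $\mathcal I_n(F_p)=a_n(F_p)+b_n(F_p)$, so $\mathcal I_n(F_p)\preceq F_p$ is equivalent to the interlacing $b_n(F_p)\preceq a_n(F_p)$ of the symmetric components. The boundary cases are handled directly: for $p=0$ one has $a_n(F_0)=0$, $b_n(F_0)=A_n/t$, and for $p=1$ one has $F_1=\tilde A_n$ palindromic so $b_n(F_1)=0$. For $p\in(0,1)$ I would derive the strict interlacing $b_n(F_p)\prec a_n(F_p)$ by computing both components from the expansion \eqref{dn}, using that each $(t+p)^{n-k}$ admits a symmetric decomposition whose two pieces are palindromic real-rooted polynomials of a $q$-shifted form, and that $d_k$ is itself palindromic with $d_k\in \mathcal{AL}_k$; this should present $a_n(F_p)$ and $b_n(F_p)$ as nonnegative combinations of a common interlacing sequence of palindromic polynomials, from which $b_n(F_p)\preceq a_n(F_p)$ follows by interlacing preservers and becomes strict thanks to the positivity of the mixed coefficients $p^{n-k}(1-p)^k$ when $p\notin\{0,1\}$.

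The hard part is Part~1: the obvious reduction via interlacing sequences of the $g_m$ fails, and one has to find the correct block-matrix/induction framework that transports the single-parameter interlacing sequence $(p_{n,1}^p,\ldots,p_{n,n}^p)$ into a two-parameter statement strong enough to extract $\A((t+q)^n)\prec\A((t+p)^n)$ as the interlacing of the first coordinates $p_{n+1,1}^q=\A((t+q)^n)$ and $p_{n+1,1}^p=\A((t+p)^n)$. Identifying precisely which interleaved sequence is preserved by the recursion, and verifying the hypotheses of Theorem~\ref{thm:charinterlacingpres} for the associated block matrix, is the main technical step.
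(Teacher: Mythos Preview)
Your plan for Part~1 is not actually carried out: you correctly abandon the naive interlacing-sequence argument for $(g_0,\ldots,g_n)$ (indeed $g_i\preceq g_j$ is not even defined once the degrees differ by more than one, so your $p=1,n=4$ computation is beside the point), but the proposed fallback---a two-parameter block-matrix extension of the recursion in Theorem~\ref{thm:main1}---is never specified.  You do not say what the joint sequence is, what the block matrix looks like, or which $2\times 2$ minors would have to be checked against Theorem~\ref{thm:charinterlacingpres}; this is the whole content of the argument.  Likewise for Part~2: asserting that $a_n(F_p)$ and $b_n(F_p)$ are nonnegative combinations of a \emph{common} interlacing family of palindromic polynomials is plausible but not established from \eqref{dn}.

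The paper avoids all of this with a short perturbation--compactness argument you are missing.  Expand
\[
\A((t+p+\epsilon)^n)=\A((t+p)^n)+\epsilon\,n\,\A((t+p)^{n-1})+\epsilon^2 r(t),
\]
and use $\A((t+p)^{n-1})\prec \A((t+p)^n)$ from Theorem~\ref{thm:main1} (and $A_{n-1}/t\prec A_n/t$ at $p=0$) to deduce $\A((t+p+\epsilon)^n)\prec \A((t+p)^n)$ for all sufficiently small $\epsilon>0$, and symmetrically for $-\epsilon$.  By compactness of $[0,1]$ there is a finite chain $1=p_0>p_1>\cdots>p_k=0$ containing $p$ and $q$ with $g_\ell:=\A((t+p_\ell)^n)$ satisfying $g_{\ell-1}\prec g_\ell$ for every $\ell$.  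Now all of these polynomials have degree $n$ (degree $n-1$ only at the endpoint $\mathcal I_n(g_k)=A_n/t$), so one may apply Lemma~\ref{lem:PetterPolya} to the single chain
\[
A_n/t=\mathcal I_n(g_k)\prec\cdots\prec\mathcal I_n(g_1)\prec\mathcal I_n(g_0)=g_0\prec g_1\prec\cdots\prec g_k=A_n,
\]
whose endpoints satisfy $A_n/t\preceq A_n$.  This yields both conclusions at once: Part~1 is the relation $g_j\prec g_i$ inside the right half of the chain, and Part~2 is $\mathcal I_n(g_i)\preceq g_i$ across the middle, strict whenever $g_i$ is strictly interior.  The crucial step you did not see is converting the one-step degree interlacing of Theorem~\ref{thm:main1} into a small-step interlacing in the \emph{parameter} $p$ via the Taylor expansion; once that is in hand, no new matrix machinery is needed.
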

\begin{proof}
Let $0<p \leq 1$. For all $\epsilon >0$,
\[
\A ((t+p+\epsilon)^n)=\A ((t+p)^n)+\epsilon n \A ((t+p)^{n-1})+\epsilon ^2 r(t),
\]
where $r(t)$ is a polynomial of degree $n-2$. Since $\A ((t+p)^{n-1})$ strictly interlaces $\A ((t+p)^n)$ by Theorem~\ref{thm:main1}, 
\[
n \A ((t+p)^{n-1})+\epsilon  r(t) \prec \A ((t+p)^n)
\]
for sufficiently small $\epsilon >0$, and thus by Lemma~\ref{lem:basics},
\[
\A ((t+p+\epsilon)^n) \prec \A ((t+p)^n),
\]
for sufficiently small $\epsilon >0$. 
This is also true for $p=0$, since $A_{n-1}/t \prec A_n/t$.

Similarly, if $0\leq p \leq 1$, then $\A ((t+p)^n) \prec \A ((t+p-\epsilon)^n)$ for all $\epsilon >0$ sufficiently small. Let $0 \leq p < q \leq 1$. By compactness there exists a finite sequence $1 = p_0 > p_1 > \cdots > p_{k-1} >p_k=0$, such that $p=p_i$ and $q=q_j$ for some $i,j$, and $\A ((t+p_{\ell-1})^n) \prec \A ((t+p_{\ell})^n)$ for all $1\leq \ell \leq k$. Let $g_\ell =  \A ((t+p_{\ell})^n)$. Then 
$$
A_n/t=\mathcal{I}_n(g_{k})   \prec  \cdots \prec \mathcal{I}_n(g_1) \prec \mathcal{I}_n(g_0)=g_0 \prec g_1 \prec \cdots \prec g_k= A_n, 
$$
and so the theorem follows from Lemma~\ref{lem:PetterPolya}.  \qedhere

\end{proof}
Let $a_n(t)=t^na(1/t)$ and $b_n(t)=t^{n-1}b(1/t)$ be the unique polynomials such that $f(t)=a_n(t)+tb_n(t)$. Then $f$ is said to have an \emph{interlacing symmetric decomposition} if $b_n(t)$ interlaces $a_n(t)$. A systematic study of interlacing symmetric decompositions was initiated in~\cite{branden2019symmetric}. The following is an immediate consequence of \cite[Theorem 2.7]{branden2019symmetric}, Theorem~\ref{thm:alternating} and equation~\eqref{eq:reverse} in Theorem~\ref{thm:interlacing}.
\begin{cor}\label{cor:interlacingsym}
The polynomial $\A ((t+q)^n)$ has an interlacing symmetric decomposition for all $0\leq q\leq 1$.
\end{cor}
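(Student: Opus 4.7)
The plan is to verify the hypotheses of \cite[Theorem 2.7]{branden2019symmetric} for the polynomial $f=\A((t+q)^n)$, at which point the interlacing symmetric decomposition is immediate. Roughly speaking, that theorem says that a polynomial lying in $\mathcal{AL}_n$ whose reversal is interlaced by itself has an interlacing symmetric decomposition, so the task reduces to supplying those two ingredients.

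First, I would exhibit $(t+q)^n$ as a nonnegative combination of the basis $\{t^k(1+t)^{n-k}\}_{k=0}^n$ by means of the convex decomposition
$$
t+q \;=\; q(1+t)+(1-q)t,
$$
which holds precisely for $0\leq q\leq 1$. Raising to the $n$-th power and using the binomial theorem yields
$$
(t+q)^n \;=\; \sum_{k=0}^n \binom{n}{k}\, q^{n-k}(1-q)^k\, t^k (1+t)^{n-k},
$$
whose coefficients in this basis are manifestly nonnegative. Theorem~\ref{thm:alternating} then places $\A((t+q)^n)$ in $\mathcal{AL}_n$, so the components $a_n$ and $b_n$ of its symmetric decomposition have nonnegative and unimodal coefficients.

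Second, equation~\eqref{eq:reverse} of Theorem~\ref{thm:interlacing} is exactly the remaining hypothesis: $\mathcal{I}_n(\A((t+q)^n))\preceq \A((t+q)^n)$ for every $0\leq q\leq 1$. Feeding this together with the $\mathcal{AL}_n$-membership into \cite[Theorem 2.7]{branden2019symmetric} produces the interlacing $b_n\preceq a_n$, which is the content of the corollary.

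There is no serious obstacle: the real-rootedness supporting these statements is provided by Theorem~\ref{thm:main1} (and by Frobenius's theorem when $q=0$), and the degenerate endpoints $q\in\{0,1\}$ only weaken the strict $\prec$ in Theorem~\ref{thm:interlacing} to $\preceq$, which is exactly what \cite[Theorem 2.7]{branden2019symmetric} requires. The only step with any content is the convex-combination identity above; the rest is direct assembly of previously established results.
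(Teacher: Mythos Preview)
Your proposal is correct and follows exactly the paper's own argument: the corollary is stated there as an immediate consequence of \cite[Theorem 2.7]{branden2019symmetric}, Theorem~\ref{thm:alternating}, and equation~\eqref{eq:reverse} of Theorem~\ref{thm:interlacing}. You have simply made explicit the one step the paper leaves implicit, namely the nonnegative expansion of $(t+q)^n$ in the basis $\{t^k(1+t)^{n-k}\}$ via the convex identity $t+q=q(1+t)+(1-q)t$.
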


\section{Topological interpretation}\label{sec:topolint}

A \emph{simplicial complex} on the groundset $[n]$ is a non-empty set $\Delta$ of subsets of $[n]$ that is closed under taking subsets. The elements of $\Delta$ are called \emph{faces}. The (combinatorial) dimension\footnote{Notice that the combinatorial dimension differs from the usual topological dimension by one.}, $\dim \sigma$, of a face $\sigma$ is defined to be $\dim \sigma =| \sigma |$. The  dimension of the simplicial complex $\Delta$ is defined as the maximal dimension of one of its faces. The number of faces of dimension $i$ is denoted by $f_i (\Delta)$. In particular, $f_0(\Delta)=1$. The \emph{$f$-polynomial} records the number of faces of $\Delta$ according to their dimension. If $\Delta$ is a $d$-dimensional simplicial complex then 
\[
f_\Delta (t) \ = \ \sum _{\sigma \in \Delta}t^{\dim \sigma} \ = \ \sum _{i=0}^d f_i (\Delta)t^i \, .
\]
The \emph{$h$-polynomial} $h_\Delta (t)$ of a $d$-dimensional simplicial complex $\Delta$ is defined by
\[
h_\Delta (t) = (1-t)^d f_\Delta \left( \frac{t}{1-t}\right) \, .
\]
To any simplicial complex on the groundset $[n]$ we can associate a simplicial complex $\Delta '$ on the ground set $2^{[n]}\setminus \emptyset \sqcup [n']$ where $2^{[n]} \setminus \emptyset$ denotes the set of all non-empty subsets of $[n]$, and $[n']=\{1',2',\ldots, n'\}$ an identical copy of $[n]$. The faces of $\Delta '$ consist of all sets of the form
\[
\{ F_1 \subsetneq F_2 \subsetneq \cdots \subsetneq F_k  \colon 0\leq k\leq n\} \sqcup \{i' : i \in S\}, \quad \text{ where } \quad S\cap F_k=\emptyset, 
\]
and where $F_1 \subsetneq F_2 \subsetneq \cdots \subsetneq F_k$ is a chain of non-empty elements of $\Delta$, called \emph{flags}. It is not hard to see that if $\Delta$ is a simplicial complex on the groundset $[n]$ then $\Delta '$ is an $n$-dimensional simplicial complex.
The following result establishes an interpretation of $\A (f)$ as the $h$-polynomial of a simplicial complex.

\begin{thm}\label{thm:topoint}
Let $\Delta$ be a simplicial complex with groundset $[n]$ and let $f_\Delta (t)=f_0+f_1t+\cdots +f_dt^d$ be its $f$-polynomial. Then the $h$-polynomial of $\Delta '$ is equal to
\[
h_{\Delta '} (t) = \sum _{i=0}^d f_i A_i (t)=\A (f) \, .
\]
\end{thm}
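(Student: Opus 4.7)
The plan is to compute the $f$-polynomial of $\Delta'$ directly, convert it to the $h$-polynomial via $h_{\Delta'}(t) = (1-t)^n f_{\Delta'}(t/(1-t))$, and then reduce the resulting expression to $\A(f_\Delta)$ using a classical identity expressing $A_m(t)$ in terms of Stirling numbers of the second kind.

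First I would enumerate the faces of $\Delta'$ by grouping them according to the top element $F_k$ of the flag part, with the degenerate case $k=0$ treated separately. Since $\Delta$ is closed under taking subsets, once a nonempty face $F\in\Delta$ with $|F|=m$ is fixed as the top of the flag, the chains $F_1\subsetneq\cdots\subsetneq F_k = F$ of nonempty faces of $\Delta$ are exactly the chains of nonempty subsets of $F$ ending at $F$. These are in bijection with ordered set partitions of $F$ into $k$ nonempty blocks, and therefore counted by $k!\,S(m,k)$. Summing over allowed $S\subseteq [n]\setminus F$ contributes a factor $(1+t)^{n-m}$, and the empty-flag case $k=0$ contributes $(1+t)^n$. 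This yields
\[
f_{\Delta'}(t) \ = \ (1+t)^n + \sum_{m=1}^d f_m(\Delta)\,(1+t)^{n-m}\sum_{k=1}^m k!\,S(m,k)\,t^k.
\]

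Next, substituting $t/(1-t)$ for $t$, multiplying by $(1-t)^n$, and using $1 + t/(1-t) = 1/(1-t)$ cancels the $(1+t)^{n-m}$ factors and reduces the theorem to the identity
\[
A_m(t) \ = \ \sum_{k=0}^m k!\,S(m,k)\,t^k(1-t)^{m-k} \qquad \text{for all } m\ge 0.
\]
This is the main (and essentially the only) nontrivial step. The cleanest route is via exponential generating functions: using $\sum_{m\ge 0} S(m,k)\,x^m/m! = (e^x-1)^k/k!$, the EGF (in $x$) of the right-hand side summed over $m$ equals the geometric series
\[
\sum_{k\ge 0}\left(\frac{t(e^{x(1-t)}-1)}{1-t}\right)^{\!k} \ = \ \frac{1-t}{1-t\exp(x(1-t))},
\]
which is precisely the EGF of $A_m(t)$ recorded in~\eqref{esexpr}.

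Once the identity is in hand, a routine substitution gives $h_{\Delta'}(t) = 1 + \sum_{m=1}^d f_m(\Delta) A_m(t)$, and using $f_0(\Delta)=1=A_0(t)$ to absorb the constant term one obtains $h_{\Delta'}(t)=\sum_{m=0}^d f_m(\Delta) A_m(t)=\A(f_\Delta)$, as required.
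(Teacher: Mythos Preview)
Your proposal is correct and follows essentially the same route as the paper's proof: both compute $f_{\Delta'}$ by grouping faces according to the top element of the flag, recognize the flag count as $k!\,S(m,k)$, and then pass to the $h$-polynomial via the identity $A_m(t)=\sum_k k!\,S(m,k)\,t^k(1-t)^{m-k}$. The only differences are cosmetic: you separate out the empty-flag case $k=0$ explicitly (which makes the $m=0$ term cleaner than in the paper's presentation), and you supply an EGF proof of the Stirling--Eulerian identity, whereas the paper simply cites it as classical.
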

\begin{proof}
We first determine the $f$-polynomial of $\Delta '$. We observe that

\begin{eqnarray*}
f_{\Delta '}(t)&=& \sum _{F\in \Delta} \sum _{\{F_1 \subsetneq F_2\subsetneq \cdots \subsetneq F_k=F \}}\sum _{S\colon S\cap F=\emptyset} t^{k+|S|}\\
&=& \sum _{F\in \Delta} (t+1)^{n-\dim F}\sum _{\{F_1 \subsetneq F_2\subsetneq \cdots \subsetneq F_k=F \}}t^{k} \, .
\end{eqnarray*}
Further, if $\dim F=\ell$, then the set of flags $\{F_1 \subsetneq F_2\subsetneq \cdots \subsetneq F_k=F \} $ of length $k$ with maximal element $F$ is in bijection with the collection of  surjective maps from $[\ell]$ to $[k]$. Their number is equal to $k!S(\ell, k)$ where $S(\ell, k)$ is the Stirling numbers of the second kind. Setting $S_\ell (t)=\sum _{k=1}^\ell k! S(\ell, k)t ^k$ we obtain  
\[
f_{\Delta '}(t)=\sum _{\ell =0}^d \sum _{F \in \Delta \atop \dim F=\ell} (1+t)^{n-\ell}\sum _{k=1}^\ell k! S(\ell, k)t^k=\sum _{\ell=0}^d f_\ell (1+t)^{n-\ell}S_\ell (t) \, .
\]
The Stirling numbers and the Eulerian numbers are related via the following classical identity, see, e.g.,~\cite[p. 314]{WagnerChains}:
\[
A_\ell (t)= (1-t)^\ell S_\ell \left( \frac{t}{1-t}\right) \, .
\]
We therefore obtain
\begin{eqnarray*}
h_{\Delta '} (t) &=&(1-t)^n f _{\Delta '} \left(\frac{t}{1-t}\right)\\
&=&(1-t)^n \sum _{\ell=0}^d f_\ell \left(\frac{1}{1-t}\right)^{n-\ell}S_\ell \left(\frac{t}{1-t}\right)\\
&=&\sum _{\ell=0}^d f_\ell (1-t)^{\ell}S_\ell \left(\frac{t}{1-t}\right)\\
&=&\sum _{\ell=0}^d f_\ell A_\ell (t),\\
\end{eqnarray*}
as desired.
\end{proof}
As a corollary we obtain the following result for simplicial complexes with nonnegative $h$-polynomials.
\begin{cor}
Suppose $\Delta$ is a $d$-dimensional simplicial complex with $h$-polynomial $h_\Delta =h_0+h_1t+\cdots +h_dt^d$, where $h_0,h_1,\ldots, h_d\geq 0$. Then the $h$-polynomial of $\Delta '$ is alternatingly increasing.
\end{cor}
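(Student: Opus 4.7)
The plan is to chain the two main results of the preceding sections: Theorem~\ref{thm:topoint} identifies $h_{\Delta'}$ with $\A(f_\Delta)$, while Theorem~\ref{thm:alternating} guarantees that $\A$ sends nonnegative linear combinations of $\{t^i(1+t)^{d-i}\}_{0\le i\le d}$ into the alternatingly increasing cone $\mathcal{AL}_d$. All that is needed to bridge them is a standard inversion of the $h$-to-$f$ relation.

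Concretely, I would start from the defining identity $h_\Delta(t)=(1-t)^d f_\Delta(t/(1-t))$ and substitute $t\mapsto t/(1+t)$ (so that $1-t\mapsto 1/(1+t)$) to obtain
\[
f_\Delta(t)=(1+t)^d\, h_\Delta\!\left(\tfrac{t}{1+t}\right)=\sum_{i=0}^d h_i\, t^i(1+t)^{d-i}.
\]
Since $h_0,\ldots,h_d\ge 0$ by hypothesis, this exhibits $f_\Delta$ as a nonnegative combination of the polynomials $\{t^i(1+t)^{d-i}\}_{0\le i\le d}$. Applying Theorem~\ref{thm:alternating} with $n=d$ then yields $\A(f_\Delta)\in\mathcal{AL}_d$, and Theorem~\ref{thm:topoint} identifies $\A(f_\Delta)$ with $h_{\Delta'}(t)$, which gives the claim.

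I do not expect any real obstacle: the corollary is a direct consequence of the two preceding theorems, and the substitution verifying the desired form of $f_\Delta$ is routine. The only mildly delicate point is conceptual, namely that the alternatingly increasing property here refers to the coefficient sequence of $h_{\Delta'}=\A(f_\Delta)$ viewed as a polynomial of degree at most $d=\dim\Delta$; this is precisely the output guaranteed by Theorem~\ref{thm:alternating} when its parameter $n$ is taken to equal $d$.
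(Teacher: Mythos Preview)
Your proposal is correct and follows essentially the same route as the paper's own proof: write $f_\Delta(t)=\sum_{i=0}^d h_i\,t^i(1+t)^{d-i}$ and then invoke Theorems~\ref{thm:alternating} and~\ref{thm:topoint}. Your added remark about applying Theorem~\ref{thm:alternating} with parameter $n=d$ (so that the alternatingly increasing property is relative to $d$, the actual degree of $h_{\Delta'}=\A(f_\Delta)$) is a useful clarification that the paper leaves implicit.
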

\begin{proof}
By definition,
\[
f_\Delta (t)=\sum _{i=0}^d h_i t^i (1+t)^{d-i} \, .
\]
The claim follows therefore directly from Theorems~\ref{thm:alternating} and~\ref{thm:topoint}.
\end{proof}

\section{Ehrhart theory}\label{Ehrhart}
A lattice polytope is the convex hull of finitely many points in $\mathbb{Z}^d$. Ehrhart theory is concerned with the enumeration of lattice points in integer dilates of lattice polytopes. Ehrhart~\cite{Ehrhart} showed that for every lattice polytope $P\subset \mathbb{R}^d$ the number of lattice points in the $n$-th dilate of $P$, $|nP\cap \mathbb{Z}^d| $, agrees with a polynomial $\Ehr _P (n)$ of degree $\dim P$ for all integers $n\geq 0$. The polynomial $\Ehr _P (n)$ is called the \emph{Ehrhart polynomial} of $P$. If $P$ is a $d$-dimensional lattice polytope, then the \emph{$h^\ast$-vector} $h^\ast (P)=(h_0(P),\ldots,h_d(P))$ is defined by
\[
\Ehr \nolimits _P (n)=h_0(P){n+d\choose d}+h_1(P){n+d-1\choose d}+\cdots + h_d(P){n\choose d} \, .
\]
The polynomial $h^\ast (P)=h_0(P)+h_1(P)t+\cdots +h_d(P)t^d$ is called the \emph{$h^\ast$-polynomial} of $P$. In this section we give  an interpretation of $\A((\theta _1t+1)\cdots (\theta _d t+1))$ as the $h^\ast$-polynomial of a lattice polytope, whenever $\theta _1,\ldots, \theta _d$ are positive integers.

To that end, we will make use of half-open decompositions of polytopes~\cite{CombinatorialPositivity}. Consider a $d$-dimensional polytope $P \subset \mathbb{R} ^d$ with facets $F_1,\ldots, F_m$. A point $q\in \mathbb{R}^d$ is in \emph{general position} if $q$ is not contained in the affine hull of $F_i$, for any $1 \leq i \leq m$. A facet $F_i$ is \emph{visible} from $q$, if $P$ and $q$ lie on opposite sides of the affine hull of $F_i$. Let $I_q (P)=\{i\in [m]\colon F_i \text{ is visible from } q\}$. Then the set
\[
H_q (P)=P \setminus \bigcup _{i \in I_q (P)}F_i
\]
is a \emph{half-open polytope}. The notions of Ehrhart polynomial and $h^\ast$-polynomial can be extended to half-open polytopes in a natural way by means of the inclusion-exclusion principle.
\begin{lem}[{\cite{CombinatorialPositivity}}]\label{lem:halfopendecomp}
Let $P_1,\ldots,P_k$ be the maximal cells of a polytopal subdivision of $P=P_1\cup P_2\cup \cdots \cup P_k$. Let $q\in \relint P$ be a point that is in general position with respect to each $P_i$. Then 
\[
h^\ast _P (t) = \sum _{i=1}^k h^\ast _{H_q (P_i)} (t) \, .
\]
\end{lem}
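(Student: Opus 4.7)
The strategy is to reduce the identity to an additivity statement at the level of lattice points and then pass to $h^\ast$-polynomials via the Ehrhart series. Concretely, I would show that the half-open polytopes $H_q(P_1),\ldots,H_q(P_k)$ partition $P$ as a disjoint union of sets, so that
\[
|nP\cap\Z^d| \;=\; \sum_{i=1}^k |nH_q(P_i)\cap\Z^d| \qquad \text{for every } n\ge 0.
\]
Assuming this, since all $P_i$ are $d$-dimensional (they are the maximal cells), both sides of the lemma arise as the numerators of the Ehrhart series written with common denominator $(1-t)^{d+1}$; comparing numerators yields the claim.

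\medskip

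The combinatorial core is therefore the disjoint-union claim. I would establish it by a perturbation argument: for a fixed $x\in P$, consider $y_\epsilon:=(1-\epsilon)x+\epsilon q$ for small $\epsilon>0$. The general position hypothesis on $q$ guarantees that $q\notin \aff F$ for every facet $F$ of every $P_i$, so $y_\epsilon$ avoids all such affine hulls for sufficiently small $\epsilon>0$ and therefore lies in the relative interior of a unique cell $P_{i^\ast}$. I would then show that $x\in H_q(P_j)$ if and only if $j=i^\ast$. The forward direction uses that for any facet $F$ of $P_{i^\ast}$ through $x$, the perturbed point $y_\epsilon$ lies on the $P_{i^\ast}$-side of $\aff F$, which forces $q$ onto that same side, so that $F$ is not visible from $q$. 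Conversely, for any other cell $P_j$ containing $x$, there is a facet of $P_j$ through $x$ whose $P_j$-side is exited by $y_\epsilon$; this facet is visible from $q$ and hence removes $x$ from $H_q(P_j)$.

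\medskip

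The hardest step will be this partition claim, in particular handling points $x$ on faces of the subdivision of codimension greater than one, where several cells meet and one must argue cleanly that the perturbation $y_\epsilon$ distinguishes them. The general-position hypothesis is essential: without it, the segment from $x$ to $q$ could lie in the affine hull of some facet and the perturbation would fail to break ties between adjacent cells. Once the partition is in hand, the remainder is routine: one extends the Ehrhart series identity $\sum_{n\ge 0}\Ehr_Q(n)\,t^n=h^\ast_Q(t)/(1-t)^{d+1}$ from closed to half-open lattice polytopes by inclusion-exclusion, applies it termwise, and equates numerators to obtain the desired formula.
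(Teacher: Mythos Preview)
The paper does not give its own proof of this lemma; it is quoted verbatim from the cited reference and used as a black box. Your proposal is therefore not being compared against anything in the present paper, but it is in fact the standard argument found in the literature (e.g., in the cited source and in the original K\"oppe--Verdoolaege treatment of half-open decompositions): one first shows that the half-open cells $H_q(P_i)$ partition $P$ set-theoretically via exactly the perturbation $y_\epsilon=(1-\epsilon)x+\epsilon q$ you describe, and then passes to Ehrhart series. Your handling of the partition step is correct, including the subtle case where $x$ lies in a face of high codimension---the point is that $q\notin\aff F$ for every facet $F$ of every cell, so $y_\epsilon$ lies off all such hyperplanes for small $\epsilon$ regardless of where $x$ sits, and hence lands in the relative interior of a unique $P_{i^\ast}$. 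The passage from the lattice-point identity to the $h^\ast$-identity is also correct as stated, since all maximal cells share the ambient dimension $d$.
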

We will also need the following lemma which is folklore (see, e.g.,\cite{BeckRobins}).
\begin{lem}\label{lem:folklore}
Let $P\in \mathbb{R}^d$ be a lattice polytope and $P\star \mathbf{0}:=\conv ((P\times \{1\}) \cup \mathbf{0})\subset \mathbb{R}^{d+1}$. Then
\[
h^\ast _P (t) = h^\ast _{P\star \mathbf{0}} (t) \, .
\]
\end{lem}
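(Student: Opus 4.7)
The plan is to compare Ehrhart series of $P$ and of its pyramid $P\star\mathbf{0}$. First, I would describe the lattice points of $n(P\star\mathbf{0})$ by slicing at integer heights. Writing a general point of $P\star\mathbf{0}$ as $\lambda(x,1)+(1-\lambda)(\mathbf{0},0)=(\lambda x,\lambda)$ with $x\in P$ and $\lambda\in[0,1]$, one sees that
\[
n(P\star\mathbf{0})=\{(y,\mu)\in\mathbb{R}^{d+1}:\mu\in[0,n],\ y\in\mu P\}.
\]
Hence any lattice point $(y,\mu)\in n(P\star\mathbf{0})\cap\mathbb{Z}^{d+1}$ must satisfy $\mu\in\{0,1,\ldots,n\}$ and $y\in\mu P\cap\mathbb{Z}^d$. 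Summing over the slice index $\mu$ gives
\[
\Ehr\nolimits_{P\star\mathbf{0}}(n)=\sum_{k=0}^{n}\Ehr\nolimits_P(k),
\]
where the term $k=0$ accounts for the apex via $\Ehr\nolimits_P(0)=1$.

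Next I would pass to generating functions. Since $P\star\mathbf{0}$ has dimension $\dim P+1$, its Ehrhart series equals $h^\ast_{P\star\mathbf{0}}(t)/(1-t)^{\dim P+2}$. Combining the identity above with the standard Ehrhart series $\sum_{n\ge 0}\Ehr\nolimits_P(n)t^n=h^\ast_P(t)/(1-t)^{\dim P+1}$, one computes
\[
\frac{h^\ast_{P\star\mathbf{0}}(t)}{(1-t)^{\dim P+2}}=\sum_{n\ge 0}\Bigl(\sum_{k=0}^{n}\Ehr\nolimits_P(k)\Bigr)t^n=\frac{1}{1-t}\cdot\frac{h^\ast_P(t)}{(1-t)^{\dim P+1}}=\frac{h^\ast_P(t)}{(1-t)^{\dim P+2}},
\]
and comparing numerators yields the desired identity $h^\ast_{P\star\mathbf{0}}(t)=h^\ast_P(t)$.

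The only subtle step is the slicing identity, but it is a routine consequence of the similar-triangles picture for pyramids over $P$ (the height coordinate is linear in the pyramid parameter, forcing integrality of $\mu$ for lattice points). Once that is in hand, the rest is a direct manipulation of generating functions and requires no further input.
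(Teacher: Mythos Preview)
Your argument is correct and is precisely the standard proof of this folklore fact. The paper does not supply its own proof of this lemma; it simply cites it as well known (see, e.g., Beck--Robins), so there is nothing to compare.
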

The preceding lemma remains true for half-open polytopes if we set
\[
H_q (P) \star  \mathbf{0} := \left(P\star \mathbf{0}\right) \setminus \bigcup _{i \in I_q (P)}\left(F_i\star \mathbf{0} \right)
\]
with the notation above.

For positive integers $\theta _1,\theta _2,\ldots, \theta _d$, let  $P_d(\theta _1,\ldots, \theta _d)\subset \mathbb{R}^d$ be the $d$-dimensional lattice polytope defined by
\[
P_d(\theta _1,\ldots, \theta _d) = \conv \left( (-\Delta _d)\cup  Q_d(\theta _1,\ldots, \theta _d)\right) \, ,
\]
where $\Delta _d =\conv (\mathbf{0}, \mathbf{e}_1,\ldots,\mathbf{e}_d)$ is the $d$-dimensional standard simplex and
\[
Q_d(\theta _1,\ldots, \theta _d)=\{\mathbf{x}\in \mathbb{R}^d\colon 0\leq x_i \leq \theta _i \text{ for all } 1\leq i\leq d \}
\]
is an axis-parallel parallelepiped with side lengths $\theta _1,\ldots, \theta _d$.

\begin{thm}\label{thm:hstarinterpretation}
For positive integers $\theta _1,\theta _2,\ldots, \theta _d$,
\[
h^\ast _{P_d(\theta _1,\ldots, \theta _d)}(t)\ = \ \A\left(\prod _{i=1}^d (\theta _i t+1) \right) \, .
\]
\end{thm}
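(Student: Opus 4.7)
The plan is to decompose $P_d$ into half-open pieces indexed by the coordinate sign pattern, identify each piece as an iterated half-open pyramid over a sub-box, and apply the half-open version of the pyramid lemma (Lemma~\ref{lem:folklore}) to compute its $h^\ast$-polynomial.

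Concretely, for each $S \subseteq [d]$ I would set
$$
P_d^S \ := \ P_d \cap \{x \in \R^d : x_i > 0 \text{ for } i \in S,\ x_j \leq 0 \text{ for } j \notin S\}.
$$
The family $\{P_d^S\}_{S \subseteq [d]}$ partitions $P_d$, so additivity of Ehrhart counts for half-open lattice polytopes gives $h^\ast_{P_d}(t) = \sum_{S \subseteq [d]} h^\ast_{P_d^S}(t)$. The first task is to identify the closure $\overline{P_d^S}$ as the convex hull of the sub-box $B_S := \{\sum_{i \in S} a_i e_i : 0 \leq a_i \leq \theta_i\}$ together with the apex points $\{-e_j : j \notin S\}$. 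Equivalently, one checks that the facet-defining inequalities of $\overline{P_d^S}$ are $x_i \geq 0$ for $i \in S$, $x_j \leq 0$ for $j \notin S$, and the ``bridge'' inequalities $x_i/\theta_i + \sum_{j \notin S}(-x_j) \leq 1$ for each $i \in S$; these are valid on $P_d$ (verifiable on the vertices $-e_1,\ldots,-e_d$ and the $2^d$ vertices of $Q_d$) and precisely describe the cell within the chosen orthant.

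Next, the key geometric observation is that $\overline{P_d^S}$ is an iterated pyramid: start from the box $B_S$ in the $S$-coordinate subspace and successively cone with the apex vertices $\{-e_j : j \notin S\}$. Because the facets removed in passing from $\overline{P_d^S}$ to $P_d^S$ are exactly $\{x_i = 0\}$ for $i \in S$---facets intrinsic to the base $B_S$ that are propagated unchanged under each pyramid step---the polytope $P_d^S$ is the iterated half-open pyramid built from the half-open box $(0,\theta]^S := \prod_{i \in S}(0,\theta_i]$. Iterated application of the half-open extension of Lemma~\ref{lem:folklore} then yields $h^\ast_{P_d^S}(t) = h^\ast_{(0,\theta]^S}(t)$. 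Since $(0,\theta]^S$ has Ehrhart polynomial $\prod_{i \in S}(n\theta_i) = n^{|S|}\prod_{i \in S}\theta_i$, and the classical identity $\sum_{n \geq 0} n^k t^n = A_k(t)/(1-t)^{k+1}$ (holding in the paper's convention for $A_k$, under $0^0 = 1$) gives $h^\ast_{(0,\theta]^S}(t) = \prod_{i \in S}\theta_i \cdot A_{|S|}(t)$, we obtain $h^\ast_{P_d^S}(t) = \prod_{i \in S}\theta_i \cdot A_{|S|}(t)$.

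Summing over $S$ and grouping by $k = |S|$ then yields
$$
h^\ast_{P_d}(t) \ = \ \sum_{S \subseteq [d]} \prod_{i \in S}\theta_i \cdot A_{|S|}(t) \ = \ \sum_{k=0}^{d} e_k(\theta_1,\ldots,\theta_d)\, A_k(t) \ = \ \A\!\left(\prod_{i=1}^d(\theta_i t + 1)\right),
$$
as desired. The main obstacle I anticipate is the geometric identification of $\overline{P_d^S}$ with the iterated pyramid over $B_S$; in particular, verifying that the bridge facets of $P_d$ interact with the pyramid structure compatibly, so that removing the facets $\{x_i = 0\}$ for $i \in S$ precisely produces the half-open pyramid described by the extended form of Lemma~\ref{lem:folklore}. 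Once this geometric picture is established, the remaining Ehrhart-theoretic calculations are routine.
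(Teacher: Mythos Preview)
Your proposal is correct and follows essentially the same route as the paper's proof. Both arguments subdivide $P_d$ by the coordinate hyperplanes into pieces indexed by subsets $S\subseteq[d]$ (equivalently, sign vectors $\sigma\in\{-1,1\}^d$), identify each closed cell with an iterated pyramid $\conv\big(B_S\cup\{-e_j:j\notin S\}\big)\cong B_S\star\mathbf{0}^{\ast(d-|S|)}$, pass to the half-open cell by removing the facets lying in $\{x_i=0\}$ for $i\in S$, and then apply the half-open pyramid lemma together with the known $h^\ast$-polynomial of a half-open box.

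The only differences are in packaging. The paper obtains the cell description from the anti-blocking lemma of \cite{artstein2020geometric} and produces the half-open pieces via Lemma~\ref{lem:halfopendecomp} with a generic visibility point $q\in\relint(P_d)\cap\R_{<0}^d$; with this choice the visible facets of each cell are precisely your $\{x_i=0\}$, $i\in S$, so the paper's $H_q$-pieces coincide with your $P_d^S$. You instead carry out the same decomposition by hand, using the explicit sign-pattern partition and listing the facet inequalities (your ``bridge'' inequalities are exactly the $\{x_i=\theta_i\}$-facets propagated through the pyramid). Your direct computation of $h^\ast_{(0,\theta]^S}$ via $\sum_{n\ge0}n^kt^n=A_k(t)/(1-t)^{k+1}$ replaces the paper's citation of \cite[Theorem~4.7]{BJM}. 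The obstacle you flag---checking that the removed facets are exactly those inherited from the base box under iterated pyramiding---is real but routine, and the facet count $2|S|+(d-|S|)$ you implicitly describe matches the pyramid structure.
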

\begin{proof}
For all $\sigma \in \{-1,1\}^d$, let $\sigma \mathbb{R}_{\geq 0}=\{(x_1\sigma _1,\ldots, x_d\sigma _d)\colon x_i\geq 0 \text{ for all } i\}$ and let $\sigma^+=\{i\in [d]\colon \sigma _i=1\}$ and $\sigma^-=[d]\setminus \sigma ^ +$.

Both polytopes $\Delta _d$ and $Q_d(\theta _1,\ldots, \theta _d)$ are anti-blocking polytopes, that is, if a point is contained in the polytope, then so are all their projections on coordinate subspaces. Thus, by~\cite[Lemma 2.4]{artstein2020geometric}, the subdivision induced by the coordinate hyperplanes yields the decomposition
\begin{eqnarray*}
P_d(\theta _1,\ldots, \theta _d) &=&\bigcup _{\sigma \in \{-1,1\}^d} \conv \left(\pi _{\sigma^-}(-\Delta _d)\cup \pi _{\sigma^+}(P_d(\theta _1,\ldots, \theta _d)) \right) \, ,
\end{eqnarray*}
where $\pi _I$ denotes the projection onto the coordinate subspace $\mathbb{R}^I=\{\mathbf{x}\in \mathbb{R}^d\colon x_i=0 \text{ for } i \not \in I\}$. Now, choosing a generic point $q\in P_d(\theta _1,\ldots, \theta _d) \cap \mathbb{R}_{<0}^d$ results in the half-open decomposition 
\begin{eqnarray*}
P_d(\theta _1,\ldots, \theta _d) &=&\bigsqcup _{\sigma \in \{-1,1\}^d} H_q \left(\conv \left((-\Delta _{\sigma^-})\cup Q_{\sigma^+}(\mathbf{\theta}) \right)\right) \, ,
\end{eqnarray*}
where $\Delta _I=\conv\left(\{\mathbf{e}_i\colon i\not \in I\}\cup\mathbf{0}\right) \subset \mathbb{R}^I$ denotes a standard simplex and
\[
Q_{I}(\mathbf{\theta})=\{\mathbf{x}\in \mathbb{R}^I \colon 0\leq x_i\leq \theta _i \text{ for all } i\not \in I\}
\]
a parallelepiped for all $I\subseteq [d]$. 
\begin{figure}[h]
\begin{tikzpicture}[scale=1.6]

\draw[thick] (0.1,1)--(2,1)--(2,0.1);
\draw[thick] (2,0)--(0.1,-0.95);
\draw[thick] (0,-1)--(-1,0);
\draw[thick] (-0.9,0.1)--(0,1);

\draw[dotted,thick] (0.1,1)--(0.1,0.1)--(2,0.1);
\draw[dotted,thick] (0.1,-0.95)--(0.1,0);
\draw[dotted,thick] (-0.9,0.1)--(0,0.1);

\fill[fill=gray!15] (0.1,1)--(2,1)--(2,0.1)--(0.1,0.1)--(0.1,1);
\fill[fill=gray!15] (2,0)--(0.1,-0.95)--(0.1,0)--(2,0);
\fill[fill=gray!15] (0,-1)--(-1,0)--(0,0);
\fill[fill=gray!15] (-0.9,0.1)--(0,1)--(0,0.1);
\draw[->] (0,-1.5)--(0,1.5);
\draw[->] (-1.5,0) -- (2.5,0);

\draw (2.5,-0.25) node {$x_1$};
\draw (-0.25,1.5) node {$x_2$};

\draw (2,-0.25) node {$2$};
\draw (-0.25,1) node {$1$};
\draw (-1.05,0.25) node {$-1$};
\draw (0.25,-1.05) node {$-1$};
\end{tikzpicture}
\caption{The lattice polytope $P_2(2,1)$ and its half-open decomposition.}
\end{figure}
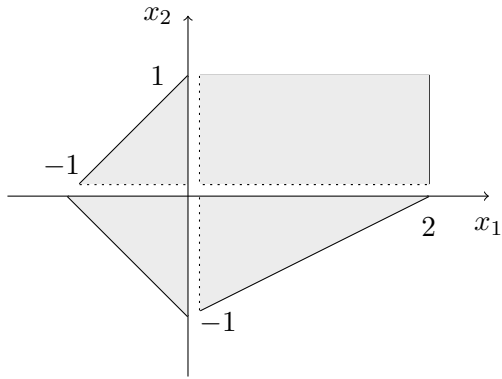

We observe that for all $\sigma \in \{\pm 1\}^d$
\[
H_q \left(\conv \left((-\Delta _{\sigma^-})\cup Q_{\sigma^+}(\mathbf{\theta}) \right)\right) \cong R_{\sigma^+}(\mathbf{\theta})\star \mathbf{0}^{\ast |\sigma ^-|},
\]
where $R_I (\theta)$ denotes the half-open parallelepiped $R_I (\theta)=\{\mathbf{x}\in \mathbb{R}^I \colon 0< x_i\leq \theta _i \text{ for all } i \in I\}$, and $\mathbf{0}^{\ast a}$ denotes applying $a$ many times the operation $\star\mathbf{0}$. The $h^\ast$-polynomial of the half-open parallelepiped $R_I (\theta)$ is equal to $h^\ast _{R_I (\theta)}=\left(\prod _{i\in I}\theta _i\right)A_{|I|}(t)$ (see, e.g.,~\cite[Theorem 4.7]{BJM}). Therefore, by Lemma~\ref{lem:halfopendecomp} and Lemma~\ref{lem:folklore},
\[
h^\ast _{P_d(\theta _1,\ldots, \theta _d)} (t) =\sum _{\sigma \in \{-1,1\}^d} \left(\prod _{i\in \sigma ^+} \theta _i\right) A_{|\sigma ^+|}(t)=T\left(\prod _{i=1}^d (\theta _i t+1) \right) \, ,
\]
as desired.
\end{proof}
The following corollary is an immediate consequence of Theorems~\ref{thm:hstarinterpretation} and~\ref{thm:alternating}.
\begin{cor}\label{cor:unimodalhstar}
The coefficients of the $h^\ast$-polynomial of $P_d(\theta _1,\ldots, \theta _d)$ form a unimodal sequence for all positive integers $\theta _1,\ldots, \theta _d$.
\end{cor}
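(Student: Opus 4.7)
The plan is to reduce the corollary to a direct application of Theorem~\ref{thm:alternating} via the identification provided by Theorem~\ref{thm:hstarinterpretation}. By Theorem~\ref{thm:hstarinterpretation}, $h^\ast_{P_d(\theta_1,\ldots,\theta_d)}(t) = \A\!\left(\prod_{i=1}^d(\theta_i t + 1)\right)$, so it suffices to prove that this polynomial is unimodal. Theorem~\ref{thm:alternating} guarantees unimodality of $\A(f)$ whenever $f$ admits a nonnegative expansion in the basis $\{t^k(1+t)^{d-k}\}_{k=0}^d$, so the whole task reduces to verifying that our particular $f = \prod_{i=1}^d(\theta_i t + 1)$ has such an expansion.

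The key identity I would use is the linear rewriting
\[
\theta_i t + 1 \;=\; (\theta_i - 1)\,t \;+\; (1+t),
\]
valid because every $\theta_i$ is a positive integer, so $\theta_i - 1 \geq 0$. Multiplying these factors together and expanding yields
\[
\prod_{i=1}^d(\theta_i t+1) \;=\; \sum_{S\subseteq[d]} \Bigl(\prod_{i\in S}(\theta_i-1)\Bigr)\, t^{|S|}(1+t)^{d-|S|} \;=\; \sum_{k=0}^d e_k(\theta_1-1,\ldots,\theta_d-1)\, t^k(1+t)^{d-k},
\]
which is precisely a nonnegative combination in the required basis since each elementary symmetric polynomial $e_k(\theta_1-1,\ldots,\theta_d-1)$ has nonnegative value.

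Applying Theorem~\ref{thm:alternating} to this expansion yields $\A(f)\in\mathcal{AL}_d$, which is in particular unimodal, and combining with Theorem~\ref{thm:hstarinterpretation} completes the argument. There is no real obstacle in this proof: the only substantive observation is the rewriting $\theta_i t + 1 = (\theta_i-1)t + (1+t)$, which crucially uses that each $\theta_i$ is a \emph{positive integer} (or more generally $\theta_i \geq 1$). Note that the stronger alternatingly increasing property is in fact obtained for free from the same argument.
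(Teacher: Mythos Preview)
Your proof is correct and follows exactly the route the paper intends: the corollary is stated there as an immediate consequence of Theorems~\ref{thm:hstarinterpretation} and~\ref{thm:alternating}, and you have simply spelled out the one nontrivial verification, namely that $\prod_{i=1}^d(\theta_i t+1)$ expands nonnegatively in the basis $\{t^k(1+t)^{d-k}\}$ via $\theta_i t+1=(\theta_i-1)t+(1+t)$ with $\theta_i-1\ge 0$. Your remark that the argument actually yields the stronger alternatingly increasing property is also correct and worth noting.
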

Lattice polytopes with unimodal $h^\ast$-polynomials are of particular interest in Ehrhart theory~\cite{braun2016unimodality}. A question by Schepers and Van Langenhoven~\cite{Schepers} asks whether every polytope having the integer decomposition property (IDP) has a unimodal $h^\ast$-polynomial. This question is part of a hierarchy of conjectures and questions that originate from a conjecture by Stanley~\cite{StanleySurvey}, originally formulated in the language of commutative algebra, which is often referred to as \emph{Stanley's Unimodality Conjecture}. A polytope $P\subset \mathbb{R}^d$ has the \emph{integer decomposition property (IDP)} if for all integers $n$ and all lattice points $p\in nP\cap \mathbb{Z}^d$ in the $n$-th dilate of $P$ there are lattice points $p_1,\ldots, p_n\in P\cap \mathbb{Z}^d$ such that $p=p_1+\cdots +p_n$. It is not hard to see that lattice parallelepipeds are IDP polytopes. Moreover, the operation $\star \mathbf{0}$ as well as taking unions preserves the integer decomposition property. From the decomposition considered in the proof of Theorem~\ref{thm:hstarinterpretation} it therefore follows that $P_d(\theta _1,\ldots, \theta _d)$ is an IDP polytope for all positive integers $\theta _1,\ldots, \theta _n$. Corollary~\ref{cor:unimodalhstar} therefore provides further evidence for the aforementioned conjectures.\\

\bigskip

\noindent {\bf Acknowledgements.} 
We would like to thank Francesco Brenti for several interesting and fruitful discussions. We would also like to thank the anonymous referee for helpful comments. This project originated at the AIM workshop ``Polyhedral geometry and partition theory'' in 2016. We are thankful to the organizers Federico Ardila, Benjamin Braun, Peter Paule and Carla Savage and the American Institute of Mathematics (AIM) for organizing and hosting the workshop. We also would like to thank Tewodros Amdeberhan, Katie Gedeon, Apoorva Khare, Kyle Petersen, Carla Savage and Mirko Visontai for interesting discussions during the workshop.

PB is a Wallenberg Academy Fellow supported by the Knut and Alice Wallenberg foundation and the G\"oran Gustafsson foundation. 
KJ is supported by the Wallenberg AI, Autonomous Systems and Software Program funded by the Knut and Alice Wallenberg Foundation, as well as Swedish Research Council grant 2018-03968 and the G\"oran Gustafsson foundation.

\bibliographystyle{siam}
\bibliography{Euler}

\end{document}